\newtheorem{theorem}{Theorem}[section]
\newtheorem{lem}[theorem]{Lemma}
\newtheorem{e-proposition}[theorem]{Proposition}
\newtheorem{corollary}[theorem]{Corollary}
\newtheorem{e-definition}[theorem]{Definition\rm}
\begin{document}


\selectlanguage{english}
\title[Composition operators]{Composition operators from logarithmic Bloch spaces to weighted Bloch spaces}


\selectlanguage{english}

\author{R. E. Castillo}

\address{Ren\'{e} E. Castillo\\Departamento de Matem\'aticas, Universidad Nacional de Colombia\\AP360354 Bogot\'{a}, Col\'ombia }

\email{recastillo@unal.edu.co}

\author{D. D. Clahane$^*$}

\address{Dana D. Clahane\\Mathematics $\&$ Computer Science Division, Fullerton
College\\321 East Chapman\\Fullerton, California,
USA\\(714)-992-7390}

\address{{\em Current Address:}\\Mathematics Department\\McCarthy Hall 154\\California
State University\\Fullerton, CA 92834}

\email{dclahane@fullcoll.edu}

\author{J. F. Far\'{i}as López}

\address{Juan F. Far\'{i}as Lopez\\Julio C. Ramos Fern\'{a}ndez\\Departamento de Matem\'atica, Universidad de Oriente\\ 6101 Cuman\'a,
Edo. Sucre, Venezuela}

\email{juan.farias.lopez@gmail.com}

\author{J. C. Ramos Fern\'andez}
\thanks{$^*$: Corresponding author.\\ \indent The second author is partially supported by a Research Sabbatical at Fullerton College, and his research
interaction with students is partially supported by ENGAGE in STEM
(Encouraging New Graduates and Gaining Expertise in Science,
Technology, Engineering and Math), a Special Program funded by the
United States Department of Education Hispanic Serving Institutions
(HSI) STEM and Articulation Programs cooperative arrangement grant
project involving California State University, Fullerton, Fullerton
College, and Santa Ana College.\\\indent The fourth author  has been
partially supported by the Comisi\'on de Investigaci\'on,
Universidad de Oriente, Project CI-2-010301-1678-10.}
\email{jcramos@udo.edu.ve}


\begin{abstract}
We characterize the analytic self-maps $\phi$ of the unit disk
${\Bbb D}$ in ${\Bbb C}$ that induce continuous composition
o\-pe\-ra\-tors $C_\phi$ from the log-Bloch space
$\mathcal{B}^{\log}({\Bbb D})$ to $\mu$-Bloch spaces ${\mathcal
B}^\mu({\Bbb D})$ in terms of the sequence of quotients of the
$\mu$-Bloch semi-norm of the $n$th power of $\phi$ and the log-Bloch
semi-norm (norm) of the $n$th power $F_n$ of the identity function
on ${\Bbb D}$, where $\mu:{\Bbb D}\rightarrow (0,\infty)$
 is continuous and bounded.  We also obtain an expression that is
equivalent to the essential norm of $C_\phi$ between these spaces,
thus characterizing $\phi$ such that $C_\phi$ is compact.  After
finding a pairwise norm equivalent family of log-Bloch type spaces
that are defined on the unit ball ${\Bbb B}_n$ of ${\Bbb C}^n$ and
include the log-Bloch space, we obtain an extension of our
boundedness/compactness/essential norm results for $C_\phi$ acting on ${\mathcal B}^{\log}$ to the case when $C_\phi$ acts on these more general log-Bloch-type spaces.

\vspace{0.1cm}
\noindent {\it Keywords:} Bloch spaces, Composition operators.

\noindent {\it MSC 2010:} 30D45, 32A30, 47B33.

\end{abstract}

\maketitle


\section{Introduction}
\subsection{Domains considered and weighted Bloch spaces}
Let $\, {\Bbb D}$ denote the unit disk in the complex plane $\Bbb
C$, and denote by $\,{\mathcal H}({\Bbb D})\, $ the linear space of
all holomorphic functions on ${\Bbb D}$.  Throughout this paper,
$\log$ denotes the natural logarithm function, and $\mu$ denotes
what we call a {\em weight} on ${\Bbb D}$; that is, $\mu$ is a
bounded, continuous and strictly positive function defined on ${\Bbb
D}$. The {\em $\mu$-Bloch space} $\, \mathcal{B}^\mu({\Bbb D})$,
which we denote more briefly by ${\mathcal B}^{\mu}$, consists of all
$f\in{\mathcal H}({\Bbb D})$ such that
$$
 \left\|f\right\|_{\mu}:=\sup_{z\in{\Bbb
 D}}\mu\left(z\right)\left|f'(z)\right|<\infty.
$$
$\mu$-Bloch spaces are called {\em weighted Bloch spaces}. When
$\mu(z)=1-|z|^2$, ${\mathcal B}^{\mu}$ becomes the classical Bloch
space $\mathcal{B}({\Bbb D})$. If $\alpha\geq 0$ and $\mu:{\Bbb
D}\to(0,1)$ is given by $\mu(z)=\left(1-|z|^2\right)^\alpha$, then
we denote $||\cdot||_\mu$ by $||\cdot||_\alpha$, and in this case,
${\mathcal B}^\mu({\Bbb D})$ is denoted by ${\mathcal
B}^\alpha({\Bbb D})$, the so-called {\em $\alpha$-Bloch space} of
${\Bbb D}$. For weights $\mu$ on ${\Bbb D}$, a Banach space
structure (cf. \cite{StevicAgarwal}) on ${\mathcal B}^{\mu}({\Bbb
D})$ arises if it is given the norm
$$\|f\|_{\mathcal{B}^\mu}:=\left|f(0)\right|+||f||_\mu.$$
These Banach spaces provide a natural setting in which one can study
properties of various operators. For instance, K. Attele in \cite{At92}
proved that if $\mu_1(z):=w(z)\log\frac{2}{w(z)}$, where
$w(z):=1-|z|^2$ and $z\in\Bbb D$, then the Hankel operator $H_f$
induced by a function $f$ in the Bergman space $A^2({\Bbb D})$ (see
\cite[~Ch.~2]{CowMac95}) is bounded if and only if $\,f\in
B^{\mu_1}({\Bbb D})$, thus giving one reason, and not the only
reason, why log-Bloch-type spaces are of interest.

\subsection{Definition of the log-Bloch space}

For notational convenience, we will state and prove our main results
for composition operators acting on the $\mu$-Bloch space ${\mathcal
B}^{v_{\log}}({\Bbb D})$, where $v_{\log}:{\Bbb
D}\rightarrow(0,\infty)$ is the weight given by
\begin{equation}\label{def-peso}
 v_{\log}(z)=\left(1-|z|\right)\log\left(\frac{3}{1-|z|}\right).
 \end{equation}

We will also denote ${\mathcal B}^{v_{\log}}({\Bbb D})$  by
${\mathcal B}^{\log}$ and the semi-norm $||f||_{{\mathcal
B}^{\log}}-|f(0)|$ by $||f||_{\text{log}}$ rather than
$||f||_{v_{\log}}$.

\subsection{Definition of a composition operator}

During the past decade, there has been a surge in new results
concerning various linear operators $L : X\to  Y$ where at least one
of the spaces $X$ and $Y$ is a space of functions satisfying a
Bloch-type growth condition.  A steadily increasing amount of
attention has been paid to the case when $L=C_\phi$, a so-called
{\em composition operator}, which we now define.

\vspace{0.2cm} Let $ \mathcal{H}_1 $ and $ \mathcal{H}_2$ be two
linear subspaces of $\, {\mathcal H}({\Bbb D})\, $. If $ \phi \, $
is a holomorphic self-map of $ \, {\Bbb D},\, $ such that $ \, f
\circ \phi \, $ belongs to $ \, \mathcal{H}_2 \, $ for all $ \, f
\in \mathcal{H}_1, \, $ then $ \, \phi \, $ induces a linear
operator $ \, C_{\phi} \, : \mathcal{H}_1 \, \to \, \mathcal{H}_2 \,
$ defined by
$$
 C_{\phi}(f) : = f  \circ  \phi.
$$
$C_\phi$ is called the \textit{composition operator} with
\textit{symbol} $ \, \phi. \, $  Composition operators continue to
be widely studied on various subspaces of $\, {\mathcal H}({\Bbb
D})\, $.  A standard introductory reference for the theory of
composition operators is the monograph by C. Cowen and B. MacCluer
\cite{CowMac95}, and another useful introduction to composition and
other operators, particularly on Bloch-type spaces, is contained in
the book by K. Zhu \cite{Zhu07}. A lively introduction to
composition operators on analytic function spaces of one complex
variable is given in J. Shapiro's book on the subject
\cite{Shapiro}. We care here about $C_\phi$ as it acts between
$X={\mathcal B}^{\log}$ and $Y={\mathcal B}^\mu$ for an arbitrary
weight $\mu$ on ${\Bbb D}$, and later in the paper, we will consider
$C_\phi$ as it acts on a more general family of spaces that include
${\mathcal B}^{\log}$.

It is natural to consider extensions of the above results, and the
results of the present paper, to a generalization of Bloch-type
spaces, called Bloch-Orlicz spaces, the first of which was
introduced in \cite{Ra10}:  Let $\mu$ be a weight on ${\Bbb D}$, and
let $\Phi:[0,\infty)\rightarrow[0,\infty)$ be a strictly increasing,
convex function such that $\Phi(0)=0$, with
$\lim_{t\rightarrow\infty}\Phi(t)=\infty$.  Then we define the {\em
$(\mu,\Phi)$-Bloch-Orlicz space} ${\mathcal B}^{\mu}_{\Phi}({\Bbb
D})$ to be the collection of all $f\in{\mathcal H}({\Bbb D})$ such
that there is a $\lambda>0$ with
\[
\sup_{z\in {\Bbb D}}\mu(z)\Phi(\lambda|f'(z)|)<\infty.
\]
Note that if $\mu(z):=1-|z|^2$ above, then J. Ramos Fernandez'
``Bloch-Orlicz space" is obtained as he defined it in \cite{Ra10},
and upon which he studied superposition operators jointly with R.
Castillo and M. Salazar in \cite{CRS}. The theory of composition
operators from and/or to ${\mathcal B}_\Phi^\mu$, which could be
developed alongside the parallel, emerging theory of composition
operators from and/or to ``Orlicz-ised" transformations of classical
integrally defined function spaces, say, the Hardy-Orlicz (cf.
\cite{Ro}) and Bergman-Orlicz (cf. \cite{Sharma}) spaces, seems
interesting.

We also point out here that log Bloch-type spaces are not simply
``made-up spaces."  Showing that outer functions on ${\mathcal B}$
are weak*-cyclic, L. Brown and A. L. Shields proved an auxiliary
result in \cite{BrownShields} which says that when $k=2$ and
$\theta=1$, the space ${\mathcal B}^{\log}_{k,\theta}$ that we
define in the last section of this paper coincides with the
multiplier space of ${\mathcal B}$.  In turn, it seems rather
fundamental and natural to study the size of composition from
log-Bloch type spaces to other (more general) weighted Bloch spaces.

\subsection{Some related results concerning composition operators on
Bloch-type spaces}

In \cite{MadMat95}, K. Madigan and A. Matheson characterized the
maps $\phi$ that generate, respectively, continuous and compact
composition operators $C_\phi$ on $\, {\mathcal B}$.  In turn, their
results were extended by Xiao \cite{Xiao} to the $\alpha$-Bloch
spaces ${\mathcal B}^\alpha({\Bbb D})$ for $\alpha>0$ and by Yoneda
\cite{Yo02} to ${\mathcal B}^{\log}$.

After he introduced a more general family of log-Bloch-type spaces that include ${\mathcal B}^{\log}$ in \cite{Stevic}, S. Stevi\'{c} obtained, jointly with R. Agarwal in \cite{StevicAgarwal}, function theoretic characterizations of holomorphic functions $\psi$ and holomorphic self-maps $\phi$ of ${\Bbb D}$ such that the weighted composition operator $W_{\psi,\phi}$ on these spaces defined by $W_{\psi,\phi}(f)=\psi(f\circ\phi)$ is bounded or compact from these spaces to ${\mathcal B}^\mu$ where $\mu$ is a weight.  Also, in \cite{ZhanX05}, X. Zhang and J. Xiao characterized the holomorphic
functions $\psi$ on the complex Euclidean unit ball ${\Bbb B}_n$ of
${\Bbb C}^n$, together with the holomorphic self-maps $\phi$ of
${\Bbb B}_n$, such that $W_{\psi,\phi}$ is bounded or compact between similarly defined $\,
\mu$-Bloch spaces of ${\Bbb B}_n$. For $n>1$, it is required here
that $\, \mu \, $ be a so-called \emph{normal} function. The results
of Zhang and Xiao were extended by H. Chen and P. Gauthier
\cite{ChGa09} to the $\mu$-Bloch spaces of ${\Bbb B}_n$ for which
$\mu$ is a positive and non-decreasing continuous function such that
$\mu(t)\to 0$ as $t\to 0$ and $\mu(t)/t^\delta$ is decreasing for
small $t$ and for some $\delta
>0$.

\vspace{0.2cm} Other compactness criteria for composition operators
on Bloch spaces have been found by M. Tjani \cite{Tj}, and more
recently, H. Wulan, D. Zheng, and K. Zhu \cite{WZheZhu09} proved the following
result:
\begin{theorem}\cite{WZheZhu09}\label{th-WZZ09}
Let $\phi$ be an analytic self-map of $\Bbb D$. Then $C_\phi$ is
compact on ${\mathcal B}$ if and only if
$$
     \lim_{j\to\infty}\|\phi^j\|_{{\mathcal B}}=0.
    $$
   \end{theorem}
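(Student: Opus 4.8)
The plan is to reduce the statement to the classical Madigan--Matheson geometric description of compactness on $\mathcal B$ and then to an elementary analysis of the one-variable function $h_j(t)=jt^{j-1}(1-t^2)$. First I would recall two standard facts. By the Schwarz--Pick inequality one has $(1-|z|^2)|\phi'(z)|\le 1-|\phi(z)|^2$ for every $z\in{\Bbb D}$, so $C_\phi$ is automatically bounded on $\mathcal B$; and by \cite{MadMat95}, $C_\phi$ is compact on $\mathcal B$ if and only if
$$\lim_{|\phi(z)|\to 1}\frac{(1-|z|^2)|\phi'(z)|}{1-|\phi(z)|^2}=0.$$
Writing $S(z):=\frac{(1-|z|^2)|\phi'(z)|}{1-|\phi(z)|^2}\in[0,1]$, and noting that $|\phi(0)|<1$ forces the point-evaluation term $|\phi(0)|^j\to 0$ (so that $\|\phi^j\|_{\mathcal B}\to0$ is equivalent to the Bloch \emph{seminorm} of $\phi^j$ tending to $0$), the whole theorem reduces to proving that $\lim_{|\phi(z)|\to1}S(z)=0$ is equivalent to $\lim_{j\to\infty}\sup_{z}(1-|z|^2)|(\phi^j)'(z)|=0$.

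Next comes the key computation and factorization. Since $(\phi^j)'=j\,\phi^{j-1}\phi'$,
$$\sup_{z\in{\Bbb D}}(1-|z|^2)\bigl|(\phi^j)'(z)\bigr|=\sup_{z\in{\Bbb D}}j(1-|z|^2)|\phi(z)|^{j-1}|\phi'(z)|=\sup_{z\in{\Bbb D}}h_j\!\left(|\phi(z)|\right)\,S(z),$$
where $h_j(t)=jt^{j-1}(1-t^2)$. The heart of the argument is thus the behaviour of $h_j$ on $[0,1)$. Writing $t=1-u$ and using $t^{j-1}\le e^{-(j-1)u}$, one checks three things: (i) there is a constant $M$ with $h_j(t)\le M$ for all $j$ and all $t\in[0,1)$; (ii) for each fixed $r\in(0,1)$, $\sup_{0\le t\le r}h_j(t)\le jr^{j-1}\to 0$ as $j\to\infty$; and (iii) for $t$ close to $1$ the integer index $j\approx 1/(1-t)$ gives $h_j(t)\ge c$ for a universal constant $c>0$, with $j\to\infty$ as $t\to1$.

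For necessity, assume $\lim_{|\phi(z)|\to1}S(z)=0$. Given $\varepsilon>0$ choose $r<1$ with $S(z)<\varepsilon$ whenever $|\phi(z)|>r$, and split the supremum: on $\{|\phi(z)|>r\}$ use (i) to bound $h_j(|\phi(z)|)S(z)\le M\varepsilon$; on $\{|\phi(z)|\le r\}$ use (ii) together with $S\le 1$ to bound $h_j(|\phi(z)|)S(z)\le jr^{j-1}<\varepsilon$ for all $j\ge N$. Hence the seminorm of $\phi^j$ is at most $(M+1)\varepsilon$ for $j\ge N$, so it tends to $0$.

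For sufficiency, assume the seminorm of $\phi^j$ tends to $0$. Fix $z$ with $t:=|\phi(z)|$ near $1$ and let $j=j(t)\to\infty$ be the index from (iii), so $h_j(t)\ge c$. Since the defining supremum of the seminorm dominates its value at this single point $z$, we get $c\,S(z)\le h_j(t)\,S(z)\le \sup_{w}(1-|w|^2)|(\phi^j)'(w)|$, and letting $|\phi(z)|\to1$ forces $j\to\infty$ and hence $S(z)\to0$. I expect the main obstacle to be item (iii): producing a uniform lower bound $h_j(t)\ge c>0$ attained at an \emph{integer} index $j\to\infty$ as $t\to1$. One must optimize $j\mapsto jt^{j-1}(1-t^2)$ over integers, control the rounding of the near-optimal real index $1/(1-t)$, and verify that the resulting constant $c$ is independent of $t$; the estimates (i)--(ii) and the two limit arguments are routine once this lower bound is in place.
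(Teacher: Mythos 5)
Your proposal is correct, but it follows a genuinely different route from the paper --- which, in fact, never proves Theorem \ref{th-WZZ09} at all: the statement is quoted from \cite{WZheZhu09} as motivation, and what the paper actually proves is the generalization to $C_\phi:\mathcal{B}^{\log}\to\mathcal{B}^\mu$ (Theorems \ref{th-acotado}, \ref{th-norm-esencial}, and \ref{th-gen-MM}). You take the Madigan--Matheson criterion of \cite{MadMat95} as a black box and reduce the power-norm condition to it via the factorization $(1-|z|^2)\left|(\phi^j)'(z)\right| = h_j\!\left(|\phi(z)|\right)S(z)$ with $h_j(t)=jt^{j-1}(1-t^2)$; your estimates (i)--(iii) are all valid, and the rounding issue you flag in (iii) is routine: taking $j=\lceil 1/(1-t)\rceil$ gives $1\le j(1-t)\le 1+(1-t)$, and for $t\ge 1/2$ one has $\log t\ge -2(1-t)$, hence $t^{j-1}\ge e^{-3}$ and $h_j(t)\ge j(1-t)\,t^{j-1}\ge e^{-3}$, so $c=e^{-3}$ works and $j(t)\to\infty$ as $t\to 1^-$. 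The paper, by contrast, cannot appeal to any Madigan--Matheson-type criterion, because its target is an arbitrary weighted Bloch space $\mathcal{B}^\mu$ with no structural hypotheses on $\mu$; instead it gets a lower bound for the essential norm by testing against the normalized monomials $F_j/\left\|F_j\right\|_{\log}$ combined with Tjani's compactness lemma (Lemmas \ref{le-Tjani} and \ref{le-Tj}), and an upper bound by approximating $C_\phi$ by the compact operators $C_\phi K_{r_j}$ built from dilations (Lemma \ref{ksubrfacts}), splitting the supremum over the annuli $A_m^\phi$ and controlling it with the lower bound for its functions $h_m$ in Lemma \ref{le-propied-hk}. What your argument buys is brevity and elementarity on the classical Bloch space; what the paper's machinery buys is generality (arbitrary weight $\mu$ as target, log-Bloch source) plus the quantitative essential-norm equivalence, of which compactness is a corollary. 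It is worth noting that your item (iii) --- a uniform positive lower bound for $h_j$ at $t\approx 1-1/j$, achieved along integer indices tending to infinity --- is precisely the Bloch-space analogue of the paper's Lemma \ref{le-propied-hk} and its sequence $(r_j)$, so the real-variable core of the two arguments coincides; the difference lies entirely in the functional-analytic wrapper around it.
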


In \cite{GiMalR09}, J. Gim\'enez, R. Malav\'e, and J. C. Ramos
Fern\'andez extended the results of \cite{MadMat95} to certain
$\mu$-Bloch spaces, where the weight $\mu$ can be extended to a
non-vanishing, complex-valued holomorphic function that satisfies a
reasonable geometric condition on the Euclidean disk $\, D(1,1)$.
Ramos Fern\'andez in \cite{Ra10} introduced Bloch-Orlicz spaces, to
which he extended all of the results mentioned above.

\subsection{The essential norm of an operator}

The essential norm of a continuous linear operator
$T$ between normed linear spaces $X$ and $Y$ is its distance from
the compact operators; that is, $\|T\|^{X\rightarrow Y}_e =
\inf\left\{\|T - K\| : K\, \text{ is compact}\right\}$, where
$\|\cdot\|$ denotes the operator norm. Notice that
$\|T\|^{X\rightarrow Y}_e = 0$ if and only if $T$ is compact, so
that estimates on $\|T\|_e^{X\rightarrow Y}$ lead to conditions for
$T$ to be compact.

\subsection{Previous results on the essential norm of $C_\phi$ on Bloch-type spaces}

The essential norm of a composition operator on ${\mathcal B}({\Bbb
D})$ was calculated by A. Montes Rodr\'{\i}guez in \cite{MoRo99}.
He obtained similar results for essential norms of weighted composition
operators between weighted Banach spaces of analytic functions in \cite{MoRo00}.  Other results in this direction appear in the paper by M. D. Contreras and A. G. Hern\'andez D\'{\i}az in \cite{ContH00}; in
particular, formulas for the essential norm of  weighted composition
operators on the $\alpha$-Bloch spaces of ${\Bbb B}_n$ were obtained
(see also the paper of B. MacCluer and R. Zhao \cite{MacZhao03}).
Recently, many extensions of the above results have appeared in the
literature; for instance, the reader is referred to the paper of
R. Yang and Z. Zhou \cite{YaZho10} and several references therein. Zhao in
\cite{Zhao10} gave a formula for the essential norm of
$C_\phi:\mathcal{B}^\alpha({\Bbb D})\to\mathcal{B}^\beta({\Bbb D})$
in terms of an expression involving norms of powers of $\phi$. More
precisely, he showed that
 $$
  \left\|C_\phi\right\|_e^
  {\mathcal{B}^\alpha({\Bbb D})\to\mathcal{B}^\beta({\Bbb D})}=\left( \frac{e}{2\alpha} \right) ^\alpha\limsup_{j\to\infty}j^{\alpha-1}\left\|\phi^j\right\|_{{\mathcal B}^{\beta}({\Bbb D})}.
 $$
It follows from the discussion at the beginning of this paragraph
that $C_\phi:\mathcal{B}^\alpha({\Bbb D})\to\mathcal{B}^\beta({\Bbb
D})$ is compact if and only if
 $$
  \lim_{j\to\infty}j^{\alpha-1}\left\|\phi^j\right\|_{{\mathcal B}^{\beta}({\Bbb D})}=0.
 $$

O. Hyv\"{a}rinen, M. Kemppainen, M. Lindstr\"{o}m, A. Rautio, and E.
Saukko in \cite{gblochpaper} recently obtained necessary and
sufficient conditions for boundedness and an expression
characterizing the essential norm of a weighted composition operator
between general weighted Bloch spaces ${\mathcal B}^{\mu}$, under
the technical requirements that $\mu$ is radial, and that it is
non-increasing and tends to zero toward the boundary of ${\Bbb D}$.
The results in the present paper, in contrast, examine a more
concrete domain space for $C_\phi$ but now for the case when the
target space of $C_\phi$ is {\em any} weighted Bloch type space not
necessarily having the aforementioned requirements.

 \subsection{Objectives and organization of the paper}

 The goal of this paper is to extend the results in \cite{WZheZhu09} for composition operators on ${\mathcal B}({\Bbb D})$ and \cite{Zhao10} for
 composition operators
 between $\alpha$-Bloch spaces of ${\Bbb D}$
 to the more general case of composition operators between log Bloch-type spaces (including the so-called log Bloch space as a special case)
 and $\mu$-Bloch spaces.  We will first show that if $\mu$ is a weight on $\Bbb
 D$ and $\phi:{\Bbb D}\rightarrow{\Bbb D}$ is holomorphic, then the following statements hold:
 \begin{itemize}
  \item (Theorem \ref{th-acotado} below) $C_\phi:\mathcal{B}^{\log}\to \mathcal{B}^{\mu}$ is bounded if and only if
    $$
     \sup_{j\in\Bbb N} \frac{\left\|\phi^j\right\|_{{\mu}}}{\left\|F_j\right\|_{\log}}<\infty.
    $$
  \item (Theorem \ref{th-gen-MM} below) $C_\phi:\mathcal{B}^{\log}\to \mathcal{B}^{\mu}$ is compact if and only if
    $$
      \lim_{j\to \infty}\frac{\left\|\phi^j\right\|_{\mu}}{\left\|F_j\right\|_{\log}}=0.
    $$
  \item (Theorem \ref{th-norm-esencial} below) If $C_\phi:\mathcal{B}^{\log}\to \mathcal{B}^{\mu}$, then
  $$
   \left\|C_\phi\right\|_e^{\mathcal{B}^{\log}\to \mathcal{B}^{\mu}}\sim
   \limsup_{j\to\infty}\frac{\left\|\phi^j\right\|_{{\mu}}}{\left\|F_j\right\|_{\log}}.
  $$
  \end{itemize}
  Above, and in what follows, $(F_j)_{j\in{\Bbb N}}$ denotes the sequence of monomial functions on ${\Bbb D}$ given by $F_j(z)=z^j$,
  and, for two positive variable quantities $A$ and $B$, we write $A\sim B$
and say that $A$ {\em is equivalent to }$B$ if and only if there is
a positive constant $K$ such that $\frac{1}{K}A\leq B\leq K\, A$ for
all values of $A$ and $B$.

In the section that follows, we will obtain results that are
auxiliary to proving Theorem \ref{th-acotado}, which we state and
prove in Section 3.  After stating and proving some auxiliary
compactness results in Section 4, we prove Theorems
\ref{th-norm-esencial} and \ref{th-gen-MM} in Section 5.  In Section
6, we will point out that ${\mathcal B}^{\log}$, which has been
called the ``log-Bloch space", is a member of a two-parameter,
pairwise norm-equivalent family of log-Bloch-type spaces that we
call $(k,\theta)$-log Bloch spaces, this particular log-Bloch space
corresponding to $k=1$ and $\theta=3$. In Section 7, we will
immediately thus obtain that all of our results for composition
operators hold more generally when $C_\phi$ acts on the
$(k,\theta)$-log Bloch spaces for $k=1$ or $2$ and $\theta\geq e$.

\section{Auxiliary Facts}

\medskip

Throughout the rest of this paper, we let $L=1-1/\log 3$.  The
following lemma, which we prove for the sake of completeness, will
only be needed to prove the one that follows it:
\begin{lem}\label{elem}
Define $A:[1,\infty)\rightarrow(0,1]$ by
\[
A(x)=\left(\frac{x}{x+L}\right)^{x-1}.
\]
Then we have that
\[
\inf_{x\geq 1}A(x)=\lim_{x\rightarrow\infty}A(x)=e^{-L}.
\]
\end{lem}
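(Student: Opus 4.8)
The plan is to show that $A$ is strictly decreasing on $[1,\infty)$ and that $A(x)\to e^{-L}$ as $x\to\infty$; together these give $\inf_{x\ge 1}A(x)=\lim_{x\to\infty}A(x)=e^{-L}$, the infimum being a limit that is approached but not attained. First I would record the harmless fact that $L=1-1/\log 3\in(0,1)$ (since $\log 3>1$), so that $A(1)=1>e^{-L}$, which is at least consistent with $A$ decreasing strictly from $1$ down toward $e^{-L}$. For the limit, writing $A(x)=\bigl(1-\tfrac{L}{x+L}\bigr)^{x-1}$ and passing to logarithms, one checks that $(x-1)\log\bigl(1-\tfrac{L}{x+L}\bigr)\to -L$ (expand the logarithm to first order, or apply L'Hopital to $\log A(x)$ written as a quotient), whence $A(x)\to e^{-L}$.

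The substance of the argument is the monotonicity. I would work with $g(x):=\log A(x)=(x-1)\bigl(\log x-\log(x+L)\bigr)$ and compute
\[
g'(x)=\log\frac{x}{x+L}+\frac{(x-1)L}{x(x+L)}.
\]
On $(1,\infty)$ the first term is negative and the second positive, so proving $g'(x)<0$ amounts to showing that the negative term dominates. The clean way to do this is the substitution $s:=L/x>0$, under which $\tfrac{x}{x+L}=\tfrac{1}{1+s}$ and $\tfrac{(x-1)L}{x(x+L)}=\tfrac{s}{1+s}\bigl(1-\tfrac1x\bigr)$, so that $g'(x)<0$ becomes
\[
\log(1+s)>\frac{s}{1+s}\Bigl(1-\frac1x\Bigr).
\]
Since $1-\tfrac1x<1$ for $x>1$, it suffices to establish the $x$-free elementary bound $\log(1+s)>\frac{s}{1+s}$ for all $s>0$. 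This follows at once from $h(s):=\log(1+s)-\frac{s}{1+s}$ having $h(0)=0$ and $h'(s)=\frac{s}{(1+s)^2}>0$. Hence $g'<0$ on $(1,\infty)$, $A=e^{g}$ is strictly decreasing, and its infimum over $[1,\infty)$ equals the limit $e^{-L}$ computed above.

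I expect no serious obstacle. The only point requiring care is the reduction of $g'(x)<0$ to a one-variable inequality: the naive estimate keeps both $s$ and $1/x$ present, and the trick is to discard the factor $1-\tfrac1x$ (legitimate because it lies in $(0,1)$ for $x>1$), so that the entire range $x\ge 1$ is dispatched by the single elementary inequality $\log(1+s)>s/(1+s)$ rather than by a genuine two-variable estimate. Once that reduction is in place, the remainder is routine.
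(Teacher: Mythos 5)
Your proposal is correct and follows essentially the same route as the paper: both compute the same derivative of $\log A$, discard the factor $\tfrac{x-1}{x}<1$, and reduce to an elementary one-variable inequality, your $\log(1+s)>\tfrac{s}{1+s}$ for $s>0$ being exactly the paper's $\log\eta-\eta+1<0$ for $\eta\in(0,1)$ under the substitution $\eta=\tfrac{1}{1+s}=\tfrac{x}{x+L}$. The limit computation and the conclusion that the infimum of a strictly decreasing function equals its limit at infinity also match the paper's argument.
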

{\it Proof:} We have that
\[
\lim_{x\rightarrow\infty}\log[A(x)]=\lim_{x\rightarrow\infty}-L\frac{x-1}{x+L}\log\left(1+\frac{-L}{x+L}\right)^{\frac{x+L}{-L}}=-L.
\]
The second equation in the statement of the lemma immediately
follows.  The first equation is obtained by observing that
\[
\frac{d}{dx}[\log A(x)]=\log \frac{x}{x+L}+\frac{Lx-L}{x(x+L)} <\log
\frac{x}{x+L}+\frac{L}{x+L}=\log \frac{x}{x+L}-\frac{x}{x+L}+1,
\]
which is negative, since $\log \eta- \eta+1$ takes on values
$-\infty$ and $0$ at $\eta=0$ and $\eta =1$ respectively and is
strictly increasing in $\eta\in(0,1)$. It follows that the
expression $\log A(x)$ (and, in turn, $A(x)$) is strictly decreasing
on $[1,\infty)$. The first equation in the statement of the lemma
then follows from this fact and the second equation in the statement
of the lemma.
 \hfill${\blacksquare}$

\medskip

In what follows, we will have need for a sequence $(r_j)_{j\in{\Bbb
W}}$, where ${\Bbb W}$ denotes the whole numbers.  We define
\begin{equation}\label{rks}
r_0=0\,\,\,{\text{ and }}\,\,\,r_j=1-\frac{L}{j+L}\,\,\,{\text{ for each }}j\in{\Bbb N}.
\end{equation}
The sequence $(r_j)_{j\in{\Bbb W}}$, lies in $[0,1)$, is strictly
  increasing and satisfies $r_j\to 1^-$
as $j\to\infty$.  If $\phi:{\Bbb D}\rightarrow{\Bbb D}$ and
$j\in\Bbb N$, then we define
\[
A^\phi_j:=\{z\in{\Bbb D}:r_{j-1}\leq |\phi(z)|<r_j\}.
\]
Also needed in the proofs of both of our results on composition
operators is the following sequence of functions $(h_j)_{j\in{\Bbb
N}}:(0,1)\rightarrow(0,\infty)$ given by
  $$h_j(t)=\frac{j}{\log(j+1)}t^{j-1}\left(1-t\right)\log\left(\frac{3}{1-t}\right),$$
and the following fact about these functions:

\begin{lem}\label{le-propied-hk}
    Let $j\in{\Bbb N}$.  Then $h_j$ is decreasing on  $[r_{j-1},1)$. Also, we have that
 \begin{equation}\label{cota-inf-z}
  h_j(t)\geq \frac{L}{2e^L} \,\,\,{\text{ for all }}t\in
    [r_{j-1},r_j].
 \end{equation}
\end{lem}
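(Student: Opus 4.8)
The plan is to prove both assertions by reducing them to the analysis of a single auxiliary function and then invoking Lemma \ref{elem} for the numerical lower bound. Since $\frac{j}{\log(j+1)}$ is a positive constant, the monotonicity of $h_j$ is equivalent to that of $g(t):=t^{j-1}(1-t)\log\frac{3}{1-t}$. Differentiating and factoring out the positive quantity $t^{j-2}$, I would reduce the first claim to showing that
$$\psi(t):=(j-1-jt)\log\frac{3}{1-t}+t\le 0 \qquad \text{for all } t\in[r_{j-1},1).$$
My strategy for this inequality is to show that $\psi$ is strictly decreasing and that $\psi(r_{j-1})\le 0$; the conclusion $\psi\le 0$ on the whole interval then follows.

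Strict monotonicity of $\psi$ should be the routine part: using $\frac{d}{dt}\log\frac{3}{1-t}=\frac{1}{1-t}$ and the identity $\frac{j-1-jt}{1-t}=j-\frac{1}{1-t}$, I expect $\psi'(t)=j\bigl(1-\log\frac{3}{1-t}\bigr)+1-\frac{1}{1-t}$, and both summands are non-positive (the first strictly so) because $\frac{3}{1-t}\ge 3>e$ and $\frac{1}{1-t}\ge 1$ on $[0,1)$. The delicate step --- and the place where the exact value $L=1-1/\log 3$ enters --- is the endpoint estimate $\psi(r_{j-1})\le 0$. Here I would substitute $1-r_{j-1}=\frac{L}{j-1+L}$ and $r_{j-1}=\frac{j-1}{j-1+L}$, whereupon $\psi(r_{j-1})\le 0$ is seen to be equivalent to $(1-L)\log\frac{3(j-1+L)}{L}\ge 1$. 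Using $1-L=\frac{1}{\log 3}$, this collapses to $\log\frac{3(j-1+L)}{L}\ge\log 3$, i.e.\ to the trivial bound $\frac{3(j-1+L)}{L}\ge 3$; the case $j=1$ gives $\psi(r_0)=\psi(0)=0$, consistent with the formula. This is the heart of the proof, since it shows the cutoff $r_{j-1}$ is calibrated precisely so that $h_j$ begins to decrease there.

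For the lower bound \eqref{cota-inf-z}, monotonicity on $[r_{j-1},1)$ gives $h_j(t)\ge h_j(r_j)$ for $t\in[r_{j-1},r_j]$, so it suffices to bound $h_j(r_j)$ from below. Substituting $r_j=\frac{j}{j+L}$ and $1-r_j=\frac{L}{j+L}$ yields
$$h_j(r_j)=\Bigl(\tfrac{j}{j+L}\Bigr)^{j-1}\cdot L\cdot\frac{j}{j+L}\cdot\frac{\log\frac{3(j+L)}{L}}{\log(j+1)}.$$
I would then recognize the first factor as $A(j)$ from Lemma \ref{elem}, so that $\bigl(\tfrac{j}{j+L}\bigr)^{j-1}\ge e^{-L}$. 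The remaining factors admit elementary lower bounds: $\frac{\log(3(j+L)/L)}{\log(j+1)}\ge 1$ because $\frac{3(j+L)}{L}\ge j+1$ (equivalently $(3-L)j+2L\ge 0$), and $\frac{j}{j+L}\ge\frac12$ because $j\ge 1>L$. Multiplying these gives $h_j(r_j)\ge e^{-L}\cdot L\cdot\frac12=\frac{L}{2e^L}$, which is exactly \eqref{cota-inf-z}.

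The main obstacle I anticipate is the endpoint inequality $\psi(r_{j-1})\le 0$: everything else is either a direct differentiation or a substitution, but that step is where one must verify that the specific choice of $L$ (and hence of the cutoffs $r_j$) makes the argument succeed, and one has to ensure that the reduction to $(1-L)\log\frac{3(j-1+L)}{L}\ge 1$ holds uniformly in $j$, including the degenerate case $j=1$.
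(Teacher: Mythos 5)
Your proof is correct and essentially reproduces the paper's argument: the same factorization $h_j'(t)=\frac{j}{\log(j+1)}\,t^{j-2}\psi(t)$, the same reduction of the bound (\ref{cota-inf-z}) to the value $h_j(r_j)$ via monotonicity, and the same combination of Lemma \ref{elem} with the elementary estimates $\log\frac{3(j+L)}{L}\geq\log(j+1)$ and $\frac{j}{j+L}\geq\frac{1}{2}$. The only divergence is how $\psi\leq 0$ on $[r_{j-1},1)$ is verified --- the paper freezes the logarithm at $\log 3$ (valid since $j-1-jt<0$ there) and asserts the resulting expression is negative, while you differentiate $\psi$ and check the endpoint $\psi(r_{j-1})\leq 0$; your variant is in fact slightly more careful, since the paper's frozen-log expression vanishes (rather than being strictly negative, as claimed) at $t=r_{j-1}$.
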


  \noindent{\it Proof. } It suffices to show that $h_j$ is decreasing on $[r_{j-1},r_j]$ and bounded below by the right side of the above inequality
  on the same interval, for each $j\in{\Bbb N}$.  Let $j\in{\Bbb N}$.  Differentiating, we obtain that for all
  $t\in(0,1)$,
  $$
   h_j'(t)=\frac{j}{\log(j+1)}t^{j-2}\left[\left(j-1-jt\right)\log\left(\frac{3}{1-t}\right)+t\right].
  $$
 Since $L\in (0,1)$, we have that for all $t\in [r_{j-1},1)$ the inequality $j-1-jt <0$ holds.  This fact, the above formula for $h_j'(t)$, and the fact that
 $\log\left(\frac{3}{1-t}\right)\geq \log(3)$ holds for all $t\in(0,1)$, together imply that for all $t\in [r_{j-1},1)$,
  $$
    h_j'(t)\leq\frac{j}{\log(j+1)}t^{j-2}\left[\left(j-1-jt\right)\log\left(3\right)+t\right]<0,
  $$
which in turn implies that $h_j$ is decreasing on
$\left[r_{j-1},1\right)$. The first statement in the lemma then
immediately follows from this statement, and the fact that
$(r_j)_{j\in{\Bbb W}}$ is increasing toward $1$ with values in
$[0,1)$.

By the first statement in the lemma, we have that for any $j\in{\Bbb
N}$ and all $t\in\left[r_{j-1},r_j\right]$,
\begin{eqnarray}
    h_j(t) & \geq & h_j\left(r_j\right)\nonumber\\
    &=&\frac{\log\left[\frac{3}{L}(j+L)\right]}{\log(j+1)}\left[\frac{j}{j+L}\right]^{j-1}\left[\frac{jL}{j+L}\right]\nonumber\\
     & > & 1\left(\frac{j}{j+L}\right)^{j-1}\frac{L}{2}\nonumber\\
     & \geq   & e^{-L}\frac{L}{2}=\frac{L}{2e^L}.\nonumber
\end{eqnarray}
The inequality above is due to Lemma \ref{elem}.  The lemma's statement (\ref{cota-inf-z}) follows.
 \hfill$\blacksquare$

\medskip

The corollary that proceeds the following lemma, and not the
lemma itself, will be used throughout this paper; nevertheless, the
lemma and its proof may be of interest. These results require defining, for all $j\in{\Bbb N}$, $H_j:(0,1)\rightarrow{\Bbb R}$, by
\[H_j(t)=\log(j+1)h_j(t).\]
To avoid the appearance of complex fractions in equations involving
limits in the statements of some of our results, here and throughout
the rest of this paper, we say that a real sequence
$(a_j)_{j\in{\Bbb N}}$ {\em is asymptotic to} another real sequence
of non-zero real numbers $(b_j)_{j\in{\Bbb N}}$ and write
``$a_j\simeq b_n$ as $j\rightarrow\infty$" if and only if
\[
\lim_{j\rightarrow\infty}\frac{a_j}{b_j}=1.
\]
\begin{lem}\label{lemy}

 The following statements hold:

 (A) For all such $j$ with $j\geq 11$, there is a unique $t_j\in(0,1)$ such that $H_j(t_j)$ is the absolute maximum of
 $H_j$.

 (B) The sequence $(t_j)_{j\in\{11,12,13,\ldots\}}$ satisfies the
 following three relations:
 \begin{equation}\label{tgoesto1minus}
\lim_{j\rightarrow\infty}t_j=1^-,
 \end{equation}
 where ``$-$" here denotes that $t_j$ tends to $1$
 from the left,
 \begin{equation}\label{expans-asint}
  t_j\simeq 1-\frac{1}{j}+\frac{1}{j\,\log\left(3j\right)}{\text{ as
  }}j\rightarrow\infty,{\text{ and }}
 \end{equation}
 \begin{equation}\label{nicetrick}
  \lim_{j\rightarrow\infty}[j(1-t_j)]=1.
 \end{equation}

 (C) We have that
 $$
  \max_{0<t<1}H_j(t)\simeq \frac{1}{e}\log\left(j+1\right),\hspace{0.2cm} \text{as}\hspace{0.2cm}j\to\infty.
 $$
\end{lem}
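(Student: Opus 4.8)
\noindent\emph{Proof strategy (proposal).} Since $H_j(t)=\log(j+1)h_j(t)=j\,t^{j-1}(1-t)\log\bigl(\tfrac{3}{1-t}\bigr)$, the function $H_j$ is continuous and strictly positive on $(0,1)$ and extends continuously to $[0,1]$ with $H_j(0)=H_j(1)=0$; hence it attains its absolute maximum at some interior point. To prove (A) the plan is to study the logarithmic derivative. Writing $\ell(t):=\log\bigl(\tfrac{3}{1-t}\bigr)$, so that $\ell'(t)=\tfrac{1}{1-t}$, a term-by-term computation gives, for $t\in(0,1)$,
\[
\psi(t):=\frac{H_j'(t)}{H_j(t)}=\frac{j-1}{t}-\frac{1}{1-t}+\frac{1}{(1-t)\ell(t)}=\frac{j-1}{t}-\frac{\ell(t)-1}{(1-t)\ell(t)}.
\]

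The key step is to show that $\psi$ is strictly decreasing. Differentiating the split form term by term and combining the two fractions carrying a factor $(1-t)^{-2}$, I would obtain
\[
\psi'(t)=-\frac{j-1}{t^2}-\frac{\ell(t)^2-\ell(t)+1}{(1-t)^2\ell(t)^2}.
\]
Since $\ell(t)^2-\ell(t)+1=\bigl(\ell(t)-\tfrac12\bigr)^2+\tfrac34>0$, both terms are negative for $j\ge2$, so $\psi'<0$ on $(0,1)$. Moreover $\psi(0^+)=+\infty$ (the term $\tfrac{j-1}{t}$ dominates) and $\psi(1^-)=-\infty$ (the term $\tfrac{\ell(t)-1}{(1-t)\ell(t)}$ dominates, as $\ell(t)\to\infty$), so the strictly decreasing $\psi$ has a unique zero $t_j$, with $\psi>0$ on $(0,t_j)$ and $\psi<0$ on $(t_j,1)$. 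Thus $H_j$ increases then decreases and $t_j$ is its unique critical point and absolute maximum; this in fact holds for every $j\ge2$, in particular for $j\ge11$, establishing (A).

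For (B), the plan is to extract asymptotics from the critical-point identity $\psi(t_j)=0$, namely
\[
\frac{(j-1)(1-t_j)}{t_j}=1-\frac{1}{\ell(t_j)}.
\]
Because $\ell(t_j)\ge\log 3$, the right-hand side lies in $[L,1)$, which yields $\tfrac{Lt_j}{j-1}\le 1-t_j<\tfrac{t_j}{j-1}$; hence $1-t_j\to0$, proving (\ref{tgoesto1minus}), and $1-t_j$ is of order $1/j$, so $\ell(t_j)=\log\bigl(\tfrac{3}{1-t_j}\bigr)$ is asymptotic to $\log(3j)$. Solving the identity for $j(1-t_j)$ gives $j(1-t_j)=\tfrac{j}{j-1}\,t_j\bigl(1-\tfrac{1}{\ell(t_j)}\bigr)\to1$, which is (\ref{nicetrick}); feeding $\ell(t_j)=\log(3j)+o(1)$ and $\tfrac{1}{j-1}=\tfrac1j(1+o(1))$ back into $1-t_j=\tfrac{t_j}{j-1}\bigl(1-\tfrac{1}{\ell(t_j)}\bigr)$ produces the expansion (\ref{expans-asint}).

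Finally, for (C) I would factor the maximum value as $H_j(t_j)=[j(1-t_j)]\,t_j^{\,j-1}\,\ell(t_j)$ and pass to the limit in each factor: the first tends to $1$ by (\ref{nicetrick}); writing $1-t_j=u_j/j$ with $u_j\to1$ gives $(j-1)\log t_j\to-1$ and hence $t_j^{\,j-1}\to e^{-1}$; and $\ell(t_j)\simeq\log(3j)\simeq\log(j+1)$. Multiplying these asymptotics yields $H_j(t_j)\simeq \tfrac1e\log(j+1)$, as claimed. The main obstacle is the monotonicity claim underpinning (A): everything hinges on recognizing that the two $(1-t)^{-2}$ contributions to $\psi'$ collapse to a single negative term via $\ell^2-\ell+1>0$; once this is secured, the asymptotic analysis in (B) and (C) is essentially careful bookkeeping with the implicit equation $\psi(t_j)=0$.
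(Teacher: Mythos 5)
Your proof is correct, and its key step takes a genuinely different route from the paper's. For part (A), the paper factors $H_j'(t)=jt^{j-2}g_j(t)$ with $g_j(t)=[(j-1)-jt]\log\frac{3}{1-t}+t$ and shows $g_j$ is strictly decreasing via the bound $g_j'(t)\le j+1-j\log 3<0$, which is exactly where the hypothesis $j\ge 11$ enters (one needs $j(\log 3-1)>1$). You instead differentiate the logarithmic derivative $\psi=H_j'/H_j$ and observe that the two $(1-t)^{-2}$ contributions collapse to $-\bigl(\ell^2-\ell+1\bigr)/\bigl[(1-t)^2\ell^2\bigr]$, which is negative unconditionally since $\ell^2-\ell+1=(\ell-\tfrac12)^2+\tfrac34>0$; this gives uniqueness of the maximizer for every $j\ge 2$, so your argument is strictly more general and shows that the restriction $j\ge 11$ in the statement is an artifact of the paper's estimate rather than a genuine obstruction. (The two critical-point equations are equivalent, since $\psi(t)=g_j(t)/[t(1-t)\ell(t)]$ and $H_j>0$.) Parts (B) and (C) of your proposal then run essentially parallel to the paper: both derive $t_j\to 1^-$, $j(1-t_j)\to 1$, $t_j^{\,j-1}\to e^{-1}$, and $\ell(t_j)\simeq\log(3j)\simeq\log(j+1)$ from the critical-point identity, and both obtain (C) by multiplying the three factors of $H_j(t_j)$. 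The minor differences there are to your advantage: your two-sided bound $Lt_j/(j-1)\le 1-t_j<t_j/(j-1)$ delivers both the limit $t_j\to 1$ and the order of $1-t_j$ in one stroke, and for the relation (\ref{expans-asint}) you produce a genuine second-order expansion of $t_j$, whereas the paper only observes that both sides tend to $1$ (which is all that the $\simeq$ relation, as defined there, actually requires).
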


\medskip

  \noindent{\it Proof. }
(A) Let $j\in\{11,12,13,\ldots\}$.  We define
$g_j:(0,1)\rightarrow{\Bbb R}$ by
  $$g_j(t)=\left\{(j-1)-jt\right\}\log\left(\frac{3}{1-t}\right)+t,$$
which implies the following estimates, the strict inequality below
being due to the assumption that $j\geq 11$:
  $$
   g_j'(t)=j+1-\frac{1}{1-t}-j\log\left(\frac{3}{1-t}\right)\leq j+1-\log(3)\,j <
   0.
  $$
 Therefore, $g_j$ is strictly decreasing, and since we also have
 that
 \[
 \lim_{t\rightarrow 0^+}g_j(t)=(j-1)\log(3)>
0{\text{ and }}\lim_{t\rightarrow 1^-}g_j(t)=-\infty,
\]
there is a unique  $t_j\in (0,1)$ such that
\begin{equation}\label{g0}
g_j(t_j)=0.
\end{equation}
These facts establish that $g_j(t)>0$ whenever $t\in(0,t_j)$ and
that $g_j(t)<0$ whenever $t\in(t_j,1)$. This statement, and a direct
calculation that
\[
H_j'(t)=jt^{j-2}g_j\left(t\right){\text{ for all }}t\in(0,1),
\]
together imply that $H_j$ has a unique absolute maximum at $t_j$, as
claimed.

\medskip

(B) Again, let $j\in\{11,12,13,\ldots\}$.  Subsequent addition of
$t_j$ and division by $j^2$ on both sides of Equation (\ref{g0}) gives
that
  \begin{equation}\label{ig-sn}
   \left(\frac{j-1}{j}-t_j\right)\log\left(\frac{3}{1-t_j}\right)=-\frac{t_j}{j}.
  \end{equation}
Since $t_j\in (0,1)$ for all  $j\in\{11,12,13,\ldots\}$, the right
side of Equation (\ref{ig-sn}) above has limit $0$ as
$j\rightarrow\infty$, and since the second factor of the left side
of this equation is bounded below by $\log 3$, we deduce that
$$
 \lim_{j\rightarrow\infty}\left(t_j-\frac{j-1}{j}\right)=0.
$$
Equation (\ref{tgoesto1minus}) then follows from the triangle
inequality, and in turn, subsequent applications of Equation
(\ref{tgoesto1minus}), and the fact that the right side of Relation
(\ref{expans-asint}) tends to $1$ as $j\rightarrow\infty$, verify
that Relation (\ref{expans-asint}) holds.

To prove Equation (\ref{nicetrick}), which we will use to prove Part (C), we employ a technique that is inspired by methods for
solving singularly perturbed non-linear equations (cf.
\cite{SMan98}) as follows: We again use the fact that the second
factor in the left side of Equation (\ref{ig-sn}) is bounded below
by $\log 3$ to obtain that
\[
   \frac{j-1}{j}-t_j=-\frac{t_j}{j\log\frac{3}{1-t_j}},
   \]
which is equivalent to the relation
\[
j-1-jt_j=-\frac{t_j}{\log\frac{3}{1-t_j}}.
\]
It follows that
\[
j-jt_j=1-\frac{t_j}{\log 3-\ln(1-t_j)}.
\]
By Equation (\ref{tgoesto1minus}), the right side of the above
equation has limit $1$ as $j\rightarrow\infty$, and Equation
(\ref{nicetrick}) is now verified.  Thus the proof of Part (B) of
the lemma is now complete.

(C) We first point out that
\[
j\log
t_j=j\frac{\log(1+[t_j-1])}{t_j-1}(t_j-1)=j(t_j-1)\frac{\log(1+[t_j-1])}{t_j-1}.
\]
By Equation (\ref{nicetrick}) and a combination of Equation
(\ref{tgoesto1minus}) and a L'Hopital's rule manipulation of the
fractional expression above, it follows that the left side of the
above equation must tend to $-1(1)=-1$.  It follows from this fact
and another application of Equation (\ref{tgoesto1minus}) that
$(j-1)\log t_j\to -1$ as $j\rightarrow\infty$.  Therefore, we have
that
\begin{equation}\label{tn-1powers}
\lim_{j\rightarrow\infty}t^{j-1}_j=\frac{1}{e}.
\end{equation}
Furthermore, Equation (\ref{nicetrick}) also allows us to observe that
\begin{equation}\label{hmmm}
 \frac{\log\left(\frac{3}{1-t_j}\right)}{\log j}=\frac{\log(3j)}{\log j}-\frac{\log\left(j\left(1-t_j\right)\right)}{\log j}\to
 1{\text{ as }}j\to\infty.
\end{equation}
Part (C) of the lemma then follows from the following chain of equations, the first of Equations (\ref{hover}) below
following from, respectively, Equations (\ref{hmmm}),
(\ref{nicetrick}), and (\ref{tn-1powers}):
\begin{eqnarray}
\lim_{j\to\infty}\frac{e}{\log j}\max_{0<t<1}H_j(t)&=&\lim_{j\to\infty}\frac{e}{\log j}\left(1-t_j\right)\log\left(\frac{3}{1-t_j}\right)t_j^{j-1}\nonumber\\
 &=&e\lim_{j\to\infty}\frac{\log\left(\frac{3}{1-t_j}\right)}{\log j}j\left(1-t_j\right)t_j^{j-1}\nonumber\\
 &=&e(1)(1)\frac{1}{e}=1.\nonumber\hfill\,\,\,\,\,\,\,\,\,\,\,\,\,\,\,\,\,\,\,\,\,\,\,\,\,\,\,\,\,\,\,\,\,\,\,\,\,\,\,\,\,\,\,\,\,\,\,\,\,\,\,\,\,\,\,\,\,\,\,\,\,\,\,\,\,\,\,\,\,\,\,\,\,\,\,\,\,\,\,\,\,\,\,\,\,\,\,\,\,\,\,\,\,\,\,\,\,\,\,\,\blacksquare\label{hover}
\end{eqnarray}

We will have use for the following immediate consequence of Lemma
\ref{lemy}:

\medskip

\begin{corollary}\label{cor-norma-zn}
 $$
  \left\|F_j\right\|_{\log}\simeq\frac{\log(j+1)}{e}{\text{ as
  }}j\to\infty.
 $$
\end{corollary}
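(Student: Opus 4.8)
The plan is to observe that the log-Bloch semi-norm of $F_j$ is, after reduction to a radial supremum, \emph{exactly} the quantity $\max_{0<t<1}H_j(t)$ whose asymptotics were just determined in Lemma \ref{lemy}(C), so that the corollary follows with essentially no new analytic work.

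First I would compute $F_j'(z)=jz^{j-1}$ and substitute into the definition of the semi-norm $\|\cdot\|_{\log}=\|\cdot\|_{v_{\log}}$, which gives
$$
 \left\|F_j\right\|_{\log}=\sup_{z\in{\Bbb D}}\left(1-|z|\right)\log\left(\frac{3}{1-|z|}\right)j|z|^{j-1}.
$$
Since the expression being supremized depends on $z$ only through $t:=|z|\in[0,1)$, the supremum over the disk collapses to a supremum over $[0,1)$ of the single-variable function $t\mapsto jt^{j-1}\left(1-t\right)\log\left(\frac{3}{1-t}\right)$, which is precisely $H_j(t)$. Thus $\left\|F_j\right\|_{\log}=\sup_{0\le t<1}H_j(t)$.

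Next I would argue that this supremum coincides with $\max_{0<t<1}H_j(t)$. For $j\ge 2$ the function $H_j$ is continuous on $[0,1)$, vanishes at $t=0$, and tends to $0$ as $t\to 1^-$ (because $\left(1-t\right)\log\left(\frac{3}{1-t}\right)\to 0$), while it is strictly positive on $(0,1)$; hence $H_j$ extends continuously to the compact interval $[0,1]$ with value $0$ at both endpoints, and so its supremum is attained at an interior point of $(0,1)$. By Lemma \ref{lemy}(A), for $j\ge 11$ this maximizer is the unique point $t_j$. Consequently $\left\|F_j\right\|_{\log}=\max_{0<t<1}H_j(t)$ for all large $j$, and invoking the asymptotic relation of Lemma \ref{lemy}(C) yields $\left\|F_j\right\|_{\log}\simeq\frac{1}{e}\log\left(j+1\right)$, which is the asserted equivalence.

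The one point requiring any care — and the closest thing here to an obstacle — is the passage from the supremum over ${\Bbb D}$ to the interior maximum over $(0,1)$, i.e.\ verifying that the supremum is genuinely attained; everything truly delicate, namely the existence and location of the maximizer $t_j$ and the limiting value of $H_j(t_j)$, has already been handled in Lemma \ref{lemy}. The corollary is therefore immediate once the semi-norm is rewritten in radial form.
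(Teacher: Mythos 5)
Your proposal is correct and is exactly the argument the paper has in mind: the paper states the corollary as an immediate consequence of Lemma \ref{lemy}, with the implicit steps being precisely your identification $\left\|F_j\right\|_{\log}=\sup_{0\le t<1}H_j(t)$ via radial reduction and the observation that this supremum equals $\max_{0<t<1}H_j(t)$, to which Part (C) applies. Your filling in of the endpoint behavior (vanishing at $t=0$ and as $t\to 1^-$ for $j\ge 2$) is the right way to justify the paper's ``immediate consequence'' claim.
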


\section{Continuity of composition operators from ${\mathcal
B}^{\log}$ to ${\mathcal B}^\mu$}
   In this section, we obtain the following norm growth-rate characterization of the holomorphic self-maps $\phi$ of ${\Bbb D}$ for which
   $C_\phi:{\mathcal B}^{{\text{log}}}\to{\mathcal B}^{\mu}$ is
   continuous, where
 $\mu$ is a fixed weight on ${\Bbb D}$:
\begin{theorem}\label{th-acotado}
    Suppose that $C_\phi:\mathcal{B}^{\log}\to\mathcal{B}^\mu$.  Then $C_\phi$ is bounded if and
only if
    \begin{equation}\label{cond-cont}
    \sup_{j\in\Bbb N} \frac{\left\|\phi^j\right\|_{{\mu}}}{\left\|F_j\right\|_{\log}}<\infty.
    \end{equation}
   \end{theorem}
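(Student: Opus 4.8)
The plan is to prove the two implications separately, with the forward (necessity) direction being immediate and the converse (sufficiency) direction carrying essentially all of the work. For necessity I would simply test $C_\phi$ on the monomials $F_j$. Since $C_\phi F_j=\phi^j$ and $F_j(0)=0$ for every $j\in{\Bbb N}$, one has $\|F_j\|_{{\mathcal B}^{\log}}=\|F_j\|_{\log}$, so boundedness of $C_\phi$ gives
$$
\|\phi^j\|_\mu\le\|C_\phi F_j\|_{{\mathcal B}^\mu}\le\|C_\phi\|\,\|F_j\|_{\log},
$$
whence the supremum in (\ref{cond-cont}) is at most the operator norm $\|C_\phi\|$.

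For sufficiency, set $M:=\sup_{j\in{\Bbb N}}\|\phi^j\|_\mu/\|F_j\|_{\log}<\infty$; the goal is to bound $\|C_\phi f\|_{{\mathcal B}^\mu}=|f(\phi(0))|+\sup_{z}\mu(z)|f'(\phi(z))||\phi'(z)|$ by a fixed multiple of $\|f\|_{{\mathcal B}^{\log}}$. The first step is the elementary growth estimate $|f'(w)|\le\|f\|_{\log}/v_{\log}(w)$ for $f\in{\mathcal B}^{\log}$, which reduces the seminorm term to producing a bound, uniform in $z$, on the quantity
$$
Q(z):=\frac{\mu(z)\,|\phi'(z)|}{(1-|\phi(z)|)\log\!\big(3/(1-|\phi(z)|)\big)}.
$$

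The heart of the argument, and the step I expect to be the main obstacle, is bounding $Q(z)$ uniformly using only the hypothesis on the powers $\phi^j$. Here I would exploit the partition ${\Bbb D}=\bigcup_{j\ge1}A^\phi_j$. Fixing $z\in A^\phi_j$ and writing $t=|\phi(z)|$, differentiation of $\phi^j$ gives $\mu(z)\,j\,t^{j-1}|\phi'(z)|\le\|\phi^j\|_\mu$; inserting the factors $j\,t^{j-1}$ and $\log(j+1)$ makes the function $h_j$ of Lemma \ref{le-propied-hk} appear in the denominator, yielding
$$
Q(z)\le\frac{\|\phi^j\|_\mu}{j\,t^{j-1}(1-t)\log(3/(1-t))}=\frac{\|\phi^j\|_\mu}{\log(j+1)\,h_j(t)}.
$$
Since $t\in[r_{j-1},r_j]$, the lower bound $h_j(t)\ge L/(2e^L)$ from Lemma \ref{le-propied-hk} gives $Q(z)\le(2e^L/L)\,\|\phi^j\|_\mu/\log(j+1)$, and Corollary \ref{cor-norma-zn} converts $\log(j+1)$ into $\|F_j\|_{\log}$ up to a constant, so that $Q(z)\le C\,\|\phi^j\|_\mu/\|F_j\|_{\log}\le CM$ with $C$ independent of $z$ and $j$. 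Letting $z$ range over all of ${\Bbb D}$ then gives $\sup_z\mu(z)|f'(\phi(z))||\phi'(z)|\le CM\,\|f\|_{\log}$.

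Finally I would control the point-evaluation term: integrating the same growth estimate for $f'$ along the segment from $0$ to $\phi(0)$ bounds $|f(\phi(0))|$ by $|f(0)|+\|f\|_{\log}\log(1/(1-|\phi(0)|))$, a fixed multiple of $\|f\|_{{\mathcal B}^{\log}}$ because $\phi(0)$ is a fixed interior point. Combining the two estimates gives $\|C_\phi f\|_{{\mathcal B}^\mu}\le C'\|f\|_{{\mathcal B}^{\log}}$ for all $f$, proving boundedness. The delicate points I anticipate are purely in the bookkeeping of constants: one must promote the asymptotic equivalence of Corollary \ref{cor-norma-zn} to genuine two-sided inequalities valid for \emph{all} $j$ (handling small $j$ separately, where the quantities are positive and finite), and one must note that the degenerate case $t=0$ arises only when $j=1$, where $t^{j-1}=1$ and no division difficulty occurs.
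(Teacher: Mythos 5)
Your proposal is correct and follows essentially the same route as the paper's proof: necessity by testing on the monomials $F_j$, and sufficiency by partitioning ${\Bbb D}$ into the sets $A^\phi_j$, bounding $\mu(z)|\phi'(z)|/v_{\log}[\phi(z)]$ via the derivative of $\phi^j$ together with the lower bound on $h_j$ from Lemma \ref{le-propied-hk} and Corollary \ref{cor-norma-zn}, then integrating the growth estimate for $f'$ to control $|f[\phi(0)]|$. The only difference is organizational: the paper treats $j=1$ as a separate case with a direct lower bound on $v_{\log}$, whereas you handle all $j$ uniformly after correctly observing that the $t=0$ degeneracy occurs only when $j=1$ and is harmless there.
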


  \noindent{\it Proof.} $\Rightarrow:$ Suppose first that $C_\phi:\mathcal{B}^{\log}\to\mathcal{B}^\mu$ and that $C_\phi$ is bounded.  Then
there is an $M\geq 0$ such that
    $\left\|C_\phi\, f\right\|_{\mathcal{B}^\mu}\leq M\|f\|_{\mathcal{B}^{\log}}$ for all functions $f\in\mathcal{B}^{\log}$. This statement and the
    facts that $C_\phi(F_j)=\phi^j$,
    $F_j\in \mathcal{B}^{\log}$ and $F_j(0)=0$ for all $j\in{\Bbb N}$ together imply that for all of these $j$'s, we have
    $$
      \frac{\left\|\phi^j\right\|_{\mu}}{\left\|F_j\right\|_{\log}}=\frac{\left\|C_\phi(F_j)\right\|_{\mu}}{\left\|F_j\right\|_{\log}}\leq
\frac{\left\|C_\phi(F_j)\right\|_{{\mathcal
B}^\mu}}{\left\|F_j\right\|_{\log}} \leq
\frac{M\left\|F_j\right\|_{{\mathcal
B}^{\log}}}{\left\|F_j\right\|_{\log}} =
\frac{M\left\|F_j\right\|_{\log}}{\left\|F_j\right\|_{\log}}=M.
    $$
    Inequality (\ref{cond-cont}) immediately follows.

        $\Leftarrow$: Suppose now that Inequality (\ref{cond-cont}) holds.  To show that $C_\phi$ is bounded, we first show that there is an $\widetilde{L}>0$ such that for all $f\in {\mathcal B}^{\log}$,
  \begin{equation}\label{afirmacion}
   \left\|f\circ \phi\right\|_{\mu}  \leq \widetilde{L}\left\|f\right\|_{\log}.
  \end{equation}
 To prove this statement, let $z\in{\Bbb D}$ be fixed.  Then there is a $j\in{\Bbb N}$ such that $|\phi(z)|\in A^{\phi}_j$.  Note that $j$ here depends
 on $\phi$ and
 $z$.  There are two cases to consider: either $j=1$ or $j\geq 2$.  Suppose first that $j=1$, so that in particular, $|\phi(z)|\leq
 r_1$.  Since $t\log(3/t)$ is increasing and positive in $t$ and we
 have now that $1-r_1\leq 1-|\phi(z)|$,
 $$
  v_{\log}\left[\phi(z)\right]\geq
  (1-r_1)\log\frac{3}{1-r_1}=\frac{L}{1+L}\log\frac{3}{\frac{L}{1+L}}=\frac{L}{1+L}\log\frac{3(1+L)}{L}>0.
 $$
Note that the rightmost nonzero quantity above, which we now denote
more briefly by $L_1$, is a constant that depends neither on $z$ nor
$\phi$.  It follows that
\begin{eqnarray}\label{case-0}
  \mu(z)\left|f'\left[\phi(z)\right]\right| \left|\phi'(z)\right|
  &=&  \frac{1}{v_{\log}\left[\phi(z)\right]}\mu(z)\left|\phi'(z)\right|
  v_{\log}\left[\phi(z)\right]\left|f'\left[\phi(z)\right]\right|\nonumber\\
  &\leq&\frac{1}{L_1}\|\phi\|_{\mu}\|f\|_{\log}.
 \end{eqnarray}
 Now suppose that $j\geq 2$.  Then we have that
 \begin{eqnarray*}
  \mu(z)\left|f'\left[\phi(z)\right]\right| \left|\phi'(z)\right|
  &\leq& \|f\|_{\log}\frac{\mu(z)}{v_{\log}\left[\phi(z)\right]}\left|\phi'(z)\right|\\
  &\leq& \|f\|_{\log}\frac{\|\phi ^{j}
  \|_{\mu}}{\|F_{j}\|_{\log}}\frac{\frac{\|F_{j}\|_{\log}}{\log(j+1)}}{h_{j}\left(\left|\phi(z)\right|\right)}.
 \end{eqnarray*}
 By hypothesis,  Lemma \ref{le-propied-hk} (with $t=\left|\phi(z)\right|\in A^{\phi}_j$), and Corollary \ref{cor-norma-zn}, it follows that there is
 an $L_2>0$, depending on neither $f$ nor $z$ in this case $j\geq 2$, such that
 \begin{equation}\label{case-1}
  \mu(z)\left|f'\left[\phi(z)\right]\right| \left|\phi'(z)\right|\leq L_2 \|f\|_{\log}.
 \end{equation}
 Since the respective estimates (\ref{case-0}) and (\ref{case-1}) hold when $j=1$ and $j\geq 2$, it follows that there is an $\tilde{L}$ such that
 Inequality (\ref{afirmacion}) holds for arbitrary $z\in{\Bbb D}$.

Finally, since each $f\in{\mathcal B}^\mu$ is analytic on
$\Bbb D$, we have that for all such $f$,
\[f[\phi(0)]=f(0)+\int_0^{\phi(0)}f'(s)ds=f(0)+\int_0^{\phi(0)}\frac{1}{v_{\log}(s)}v_{\log}(s)f'(s)ds,
\]
from which it follows that
\begin{eqnarray*}
|f[\phi(0)]|&\leq &\left|f(0)\right|+
\int_0^{\phi(0)}\left|\frac{1}{v_{\log}(s)}\right||v_{\log}(s)||f'(s)||ds|\\
&\leq & \left|f(0)\right|+\int_0^{\phi(0)}\left|\frac{1}{v_{\log}(s)}\right|||f||_{\log}|ds|\\
&=&\left|f(0)\right|+\int_0^{\phi(0)}\left|\frac{1}{v_{\log}(s)}\right||ds|||f||_{\log}.
\end{eqnarray*}
Letting $C$ denote the integral quantity in the above expression,
which is finite, non-negative, and independent of $f$, we conclude
that for all $f\in{\mathcal B}^{\mu}$,
    $$
     \left|f\left[\phi(0)\right]\right|+ \left\|f\circ\phi\right\|_{\mu} \leq \left|f(0)\right|+\left(C +
     \widetilde{L}\right)\|f\|_{\log}\leq Q(|f(0)|+||f||_{\log})=Q||f||_{{\mathcal B}^{\log}},
    $$
where $Q:=\max\{1,C+\tilde{L}\}$. The converse portion of the
theorem follows, thus completing the proof of the theorem.
\hfill$\blacksquare$

\section{Auxiliary results on compactness}
Now that we know which analytic self-maps $\phi$ of ${\Bbb D}$
induce bounded composition operators from the log-Bloch space to the
$\mu$-Bloch space, we now turn to the issue of compactness, which we
handle by studying the essential norm of $C_\phi$. Theorem
\ref{th-norm-esencial}, stated and proved in the section that
follows, gives an expression that is equivalent to the essential
norm of $C_\phi$ in this setting.  We will need to prove two
auxiliary facts first, the first of which is the following lemma
that appears in \cite{Tj} and also in \cite{Tj2}.  In both of these
places, we point out a typographical error (``point evaluation
functionals on $X$" there, as one can see from Relation (16) in
\cite{Tj2}, should be ``point evaluations on $Y$").
\begin{lem}[Tjani]\label{le-Tjani}
 Let $X,Y$ be two Banach spaces of analytic functions on $\Bbb D$. Suppose that
 \begin{enumerate}
  \item Point evaluation functionals on $Y$ are bounded.
  \item The closed unit ball of $X$ is a compact subset of $X$ in the topology of uniform convergence on compact sets.
  \item $T:X\to Y$ is continuous when $X$ and $Y$ are given the topology of uniform convergence on compact sets.
 \end{enumerate}
 Then $T$ is a compact operator if and only if given a bounded sequence $(f_j)_{j\in{\Bbb N}}$ in $X$ such that $f_j\to 0$ uniformly on compact sets,
 $(Tf_j)_{j\in{\Bbb N}}\rightarrow 0$ in $Y$.
\end{lem}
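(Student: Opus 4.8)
The plan is to prove the two implications separately, exploiting the two topologies in play---the norm topology of $Y$ and the topology of uniform convergence on compact subsets of ${\Bbb D}$---with the bridge between them furnished by hypothesis (1): since point evaluations on $Y$ are bounded, norm convergence in $Y$ forces pointwise convergence. Throughout I would use that $T$ is linear, as in our intended application to $C_\phi$.

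First I would establish the forward implication. Assume $T$ is compact and let $(f_j)_{j\in{\Bbb N}}$ be bounded in $X$ with $f_j\to 0$ uniformly on compact sets; the goal is $\|Tf_j\|_Y\to 0$. I would argue by contradiction, extracting a subsequence along which $\|Tf_{j_k}\|_Y\geq\epsilon$ for some $\epsilon>0$. Since $T$ is compact and $(f_{j_k})$ is bounded, a further subsequence has $Tf_{j_{k_l}}$ converging in the norm of $Y$ to some limit $g$; because the norms $\|Tf_{j_{k_l}}\|_Y$ stay at least $\epsilon$ and converge to $\|g\|_Y$, we obtain $\|g\|_Y\geq\epsilon$, so $g\neq 0$. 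On the other hand, hypothesis (3) gives $Tf_{j_{k_l}}\to 0$ uniformly on compact sets, hence pointwise, while hypothesis (1) converts the norm convergence $Tf_{j_{k_l}}\to g$ into pointwise convergence to $g$. Comparing the two pointwise limits forces $g=0$, a contradiction.

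For the reverse implication, I would assume the stated sequential condition and let $(f_j)_{j\in{\Bbb N}}$ be any bounded sequence in $X$, say with $\|f_j\|_X\leq M$; to conclude that $T$ is compact it suffices to extract a norm-convergent subsequence of $(Tf_j)$. Since $M^{-1}f_j$ lies in the closed unit ball of $X$, hypothesis (2) furnishes a subsequence $(f_{j_k})$ converging uniformly on compact sets to some limit $f$, and the same hypothesis guarantees $f\in X$. Then $g_k:=f_{j_k}-f$ is bounded in $X$ and tends to $0$ uniformly on compact sets, so the sequential hypothesis yields $\|Tf_{j_k}-Tf\|_Y=\|Tg_k\|_Y\to 0$. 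Thus $(Tf_{j_k})$ converges in the norm of $Y$, and $T$ is compact.

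The main obstacle---really the only delicate point---is the correct simultaneous use of the two topologies. In the forward direction one must exploit norm convergence to produce a nonzero candidate limit while exploiting uniform-on-compacts convergence to identify that same limit as zero pointwise, the reconciliation of the two resting squarely on hypothesis (1). In the reverse direction, the subtle step is that the uniform-on-compacts limit $f$ of the extracted subsequence must lie in $X$, so that $Tf$ is defined and $g_k$ is a legitimate sequence in $X$ to which the hypothesis applies; this is precisely the role of hypothesis (2), where genuine compactness of the unit ball in the weaker topology---not merely relative compactness---is what is needed.
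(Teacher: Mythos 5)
Your proof is correct. Note that the paper itself does not prove this lemma at all: it is quoted as Tjani's result and attributed to \cite{Tj} and \cite{Tj2}, so there is no in-paper argument to compare against; what you have written is essentially the standard proof (the one in Tjani's thesis), and it correctly isolates the roles of the three hypotheses --- hypothesis (1) to convert norm convergence in $Y$ into pointwise convergence, hypothesis (3) to push uniform-on-compacta convergence through $T$, and hypothesis (2) to extract a limit $f\in X$ so that $f_{j_k}-f$ is a legitimate bounded sequence in $X$ tending to $0$ on compacta. The only step you use silently is that compactness of the closed unit ball of $X$ in the topology of uniform convergence on compact sets yields \emph{sequential} compactness; this is legitimate because that topology on $\mathcal{H}({\Bbb D})$ is metrizable (via an exhaustion of ${\Bbb D}$ by compact sets), but it deserves a sentence, since for a general compact topological space one cannot extract convergent subsequences. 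With that remark added, your argument is a complete substitute for the citation.
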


We will use Tjani's Lemma to prove the following fact, whose purpose is to prove the lemma that follows it. Though it is familiar to some readers, we will sketch the details of the proof to maintain completeness.

\begin{lem} Suppose that $\mu$ is a weight on ${\Bbb D}$.  Then the following statements hold:

(A) For each compact $K\subset{\Bbb {\Bbb D}}$, there is a $C_K\geq 0$ such
that for all $f\in {\mathcal B}^\mu$ and $z\in K$, we have that
\[
|f(z)|\leq C_K\left\|f\right\|_{\mathcal{B}^{\mu}}.
\]

(B) Every point evaluation functional on $\mathcal{B}^{\mu}$ is
bounded.

(C) The closed unit ball of ${\mathcal B}^\mu$ is compact in the
topology of uniform convergence on compacta.

(D) If $\gamma$ is a weight on ${\Bbb D}$ and $\phi:{\Bbb D}\rightarrow{\Bbb D}$ is holomorphic, with $C_\phi:{\mathcal B}^\mu\rightarrow {\mathcal B}^\gamma$, then $C_\phi$ is continuous with respect to the compact-open subspace topologies on ${\mathcal B}^\mu$ and ${\mathcal B}^\gamma$.
\end{lem}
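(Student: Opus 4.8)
The plan is to establish (A) first, since (B) and (C) will follow from it, and then to handle (D) separately by a metrizability argument. For (A), I would fix a compact set $K\subset{\Bbb D}$ and observe that $\hat{K}:=\{tz:t\in[0,1],\,z\in K\}$ is the continuous image of $[0,1]\times K$ under $(t,z)\mapsto tz$, hence a compact subset of ${\Bbb D}$ (using convexity of ${\Bbb D}$ and $0\in{\Bbb D}$ to keep each segment inside ${\Bbb D}$). Since $\mu$ is continuous and strictly positive, it attains a minimum $m>0$ on $\hat{K}$. For $f\in{\mathcal B}^\mu$ and $z\in K$, integrating $f'$ along the segment from $0$ to $z$ and writing $|f'(w)|=\mu(w)|f'(w)|/\mu(w)\leq\|f\|_\mu/m$ along that segment gives $|f(z)|\leq|f(0)|+(R/m)\|f\|_\mu$, where $R:=\sup_{z\in K}|z|<1$. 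Taking $C_K:=\max\{1,R/m\}$ then yields $|f(z)|\leq C_K\|f\|_{{\mathcal B}^\mu}$. Statement (B) is immediate: for fixed $z_0\in{\Bbb D}$, applying (A) with $K=\{z_0\}$ shows that the linear point evaluation $f\mapsto f(z_0)$ satisfies $|f(z_0)|\leq C_{\{z_0\}}\|f\|_{{\mathcal B}^\mu}$, so it is bounded.

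For (C), I would argue by normal families. Let $(f_n)$ be a sequence in the closed unit ball of ${\mathcal B}^\mu$. By (A), for every compact $K$ we have $|f_n(z)|\leq C_K$ for all $z\in K$ and all $n$, so the family is locally bounded and hence normal by Montel's theorem. I would extract a subsequence $(f_{n_k})$ converging uniformly on compacta to some $f\in{\mathcal H}({\Bbb D})$; by the Weierstrass theorem on uniform limits of analytic functions, the derivatives $f_{n_k}'$ converge to $f'$ uniformly on compacta, so $f_{n_k}(0)\to f(0)$ and $f_{n_k}'(z)\to f'(z)$ pointwise. The crux is to verify that the limit function lies in the closed unit ball: for each fixed $z\in{\Bbb D}$ one has $|f_{n_k}(0)|+\mu(z)|f_{n_k}'(z)|\leq\|f_{n_k}\|_{{\mathcal B}^\mu}\leq 1$, and letting $k\to\infty$ gives $|f(0)|+\mu(z)|f'(z)|\leq 1$; taking the supremum over $z$ yields $\|f\|_{{\mathcal B}^\mu}\leq 1$. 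Since the compact-open topology on ${\mathcal H}({\Bbb D})$ is metrizable, this sequential compactness is exactly the asserted compactness.

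Finally, for (D) I would again use sequential continuity, valid because the compact-open subspace topologies are metrizable. Suppose $f_n\to f$ uniformly on compacta in ${\mathcal B}^\mu$. Fix a compact $K\subset{\Bbb D}$; since $\phi$ is continuous and maps ${\Bbb D}$ into ${\Bbb D}$, the image $\phi(K)$ is a compact subset of ${\Bbb D}$, and therefore $\sup_{z\in K}|f_n(\phi(z))-f(\phi(z))|\leq\sup_{w\in\phi(K)}|f_n(w)-f(w)|\to 0$. Hence $C_\phi f_n=f_n\circ\phi\to f\circ\phi=C_\phi f$ uniformly on $K$, and since $K$ was arbitrary, $C_\phi$ is continuous for these topologies.

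I expect the main obstacle to be the verification in (C) that the normal limit $f$ genuinely belongs to the closed unit ball, which requires passing the norm bound to the limit through the pointwise convergence of both $f_{n_k}(0)$ and $f_{n_k}'$ before taking the supremum; once (A) is in hand, the remaining parts are routine.
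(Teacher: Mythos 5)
Your proposal is correct and follows essentially the same route as the paper: for (A), integration of $f'$ along radial segments together with the positive minimum of $\mu$ on a compact set; (B) as an immediate consequence; Montel's theorem and passage to the limit for (C); and sequential continuity using the compactness of $\phi(K)\subset{\Bbb D}$ for (D). If anything, your limit step in (C) is slightly more careful than the paper's, since by retaining the $|f(0)|$ term you verify that the normal limit lies in the closed unit ball, whereas the paper only checks the seminorm bound $\mu(z)|f'(z)|\leq 1$.
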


\begin{proof}
(A): Suppose that $K\subset{\Bbb D}$ is compact, and let $\mu$ be a
weight on ${\Bbb D}$.  Then there is an $r\in[0,1)$ such that
$|z|\leq r$ for all $z\in K$, and for each $z$, the line segment
$[0,z]\subset{\overline D}(0,r)$, the compact Euclidean disk
centered at $0$ with radius $r$.  Since $\mu$ is a weight, it
follows that there must be a $Q>0$ such that $\mu(s)\geq Q$ for all
$s\in D(0,r)$, and in particular, for all $s$ on the line segment
$[0,z]$, for all $z\in K$. Thus for all $z\in K$ and $s\in[0,z]$, we
have that $1/\mu(s)\leq 1/Q$. which implies that for all
$f\in{\mathcal B}^\mu$ and $z\in K$,
$$
 \left|f(z)\right|\leq \left|f(0)\right|+\int_0^z\frac{\left\|f\right\|_{\mu}}{\mu(s)}|ds|\leq
 \left(1+\frac{1}{Q}\right)\left\|f\right\|_{\mu}\leq \left(1+\frac{1}{Q}\right)\left\|f\right\|_{{\mathcal B}^{\mu}}.
$$
Thus the proof of (A) is complete.

(B): Part (B) follows immediately from Part (A).

(C): It follows from Part (A) that the unit ball of ${\mathcal B}^\mu$ is uniformly
bounded on compacta.  Therefore, by Montel's theorem (cf.
\cite{Conw78}), any sequence $(f_n)_{n\in{\Bbb N}}$ in this unit
ball must be a normal family, and there is a subsequence
$(f_{n_k})_{k\in{\Bbb N}}$ that must converge uniformly on compacta
to some $f\in{\mathcal H}(\Bbb D)$.  By another of Montel's
Theorems, $f_{n_k}'\rightarrow f'$ uniformly on compacta as well and
pointwise in particular. We then have that for any $z\in{\Bbb D}$,
\[
\mu(z)|f'(z)|=\lim_{k\rightarrow\infty}\mu(z)|f_{n_k}'(z)|\leq 1,
\]
since $||f_{n_k}||_\mu\leq ||f_{n_k}||_{{\mathcal B}^{\mu}}\leq 1$
for each $k\in{\Bbb N}$. Thus $f\in {\mathcal B}_\mu$, and we thus
have shown that the closed unit ball of ${\mathcal B}_\mu$ is
compact in the compact-open topology, as desired.

(D): Let $f_n\rightarrow f$ uniformly in ${\mathcal B}^\mu$ on compacta, so that in turn, $f_n\circ\phi\rightarrow f\circ\phi$ uniformly on compacta as well.  By assumption, $C_\phi(f)\in{\mathcal B}^\gamma$ and $C_\phi(f_n)\in {\mathcal B}^\gamma$ for all $n\in{\Bbb N}$, and the statement of Part (D) follows.
\end{proof}

Combining the above lemma and Lemma \ref{le-Tjani}, we obtain the
following principal auxiliary result of this section:

 \begin{lem}\label{le-Tj}
  Let $\mu_1$ and $\mu_2$ be weights on $\Bbb D$, and suppose that $\phi:{\Bbb D}\to{\Bbb D}$ is
  holomorphic.  Then $C_\phi:\mathcal{B}^{\mu_1}\to
\mathcal{B}^{\mu_2}$ is compact  if and only if given a
bounded sequence
  $(f_j)_{j\in{\Bbb N}}$ in $\mathcal{B}^{\mu_1}$ such that $\, f_j\to 0\, $ uniformly on compact
   subsets of $\, \Bbb D$, then $\left\|C_\phi(f_j)\right\|_{{\mathcal B}^{\mu_2}}\to 0$ as $j\to\infty$.
 \end{lem}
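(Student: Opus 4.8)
We want to show that $C_\phi : \mathcal{B}^{\mu_1} \to \mathcal{B}^{\mu_2}$ is compact iff for every bounded sequence $(f_j)$ in $\mathcal{B}^{\mu_1}$ converging to $0$ uniformly on compact subsets of $\mathbb{D}$, we have $\|C_\phi(f_j)\|_{\mathcal{B}^{\mu_2}} \to 0$.

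This is essentially an immediate application of Tjani's Lemma (Lemma \ref{le-Tjani}). Tjani's Lemma gives exactly this characterization of compactness provided three hypotheses hold:
1. Point evaluation functionals on $Y = \mathcal{B}^{\mu_2}$ are bounded.
2. The closed unit ball of $X = \mathcal{B}^{\mu_1}$ is compact in the compact-open topology.
3. $T = C_\phi$ is continuous from $X$ to $Y$ in the compact-open topologies.

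The preceding lemma (parts (A)–(D)) was designed precisely to verify these three hypotheses.

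**My proof plan:**

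The plan is to invoke Tjani's Lemma (Lemma \ref{le-Tjani}) with $X = \mathcal{B}^{\mu_1}$, $Y = \mathcal{B}^{\mu_2}$, and $T = C_\phi$. To do this I simply need to check its three hypotheses, each of which is supplied by a part of the immediately preceding lemma. Hypothesis (1), that point evaluation functionals on $\mathcal{B}^{\mu_2}$ are bounded, is exactly Part (B) of that lemma applied with $\mu = \mu_2$. Hypothesis (2), that the closed unit ball of $\mathcal{B}^{\mu_1}$ is compact in the topology of uniform convergence on compacta, is exactly Part (C) applied with $\mu = \mu_1$. Hypothesis (3), continuity of $C_\phi$ with respect to the compact-open topologies, is exactly Part (D) applied with $\mu = \mu_1$ and $\gamma = \mu_2$.

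First I would state that, since $\mu_1$ and $\mu_2$ are weights on $\mathbb{D}$ and $\phi$ is a holomorphic self-map with $C_\phi : \mathcal{B}^{\mu_1} \to \mathcal{B}^{\mu_2}$, the three hypotheses of Lemma \ref{le-Tjani} are satisfied by Parts (B), (C), and (D) of the preceding lemma, respectively. Then the conclusion is immediate: $C_\phi$ is compact if and only if the stated sequential condition holds. There is essentially no computation; the work has all been front-loaded into the preceding lemma.

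**The main (only real) obstacle.** Honestly, this lemma is a bookkeeping/assembly step, so there is no substantive obstacle. The one thing I would be careful about is matching indices correctly — Tjani's Lemma is stated abstractly with $X$ and $Y$, and I must correctly assign $X = \mathcal{B}^{\mu_1}$ (domain, where I need unit-ball compactness) and $Y = \mathcal{B}^{\mu_2}$ (codomain, where I need bounded point evaluations). The typographical warning flagged in the excerpt (that the bounded point evaluations must be on $Y$, not $X$) is exactly the subtlety to respect here, so I would double-check that I apply Part (B) to the codomain weight $\mu_2$. Beyond that, the proof is a direct citation of the hypotheses.

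Here is the proof:

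\begin{proof}
We apply Tjani's Lemma (Lemma \ref{le-Tjani}) with $X=\mathcal{B}^{\mu_1}$, $Y=\mathcal{B}^{\mu_2}$, and $T=C_\phi$. We verify its three hypotheses. First, by Part (B) of the preceding lemma applied with $\mu=\mu_2$, every point evaluation functional on $Y=\mathcal{B}^{\mu_2}$ is bounded. Second, by Part (C) of the preceding lemma applied with $\mu=\mu_1$, the closed unit ball of $X=\mathcal{B}^{\mu_1}$ is compact in the topology of uniform convergence on compact subsets of $\mathbb{D}$. Third, by Part (D) of the preceding lemma applied with $\mu=\mu_1$ and $\gamma=\mu_2$, the operator $C_\phi:\mathcal{B}^{\mu_1}\to\mathcal{B}^{\mu_2}$ is continuous with respect to the compact-open subspace topologies. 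Thus all three hypotheses of Lemma \ref{le-Tjani} hold, and the conclusion of that lemma states precisely that $C_\phi:\mathcal{B}^{\mu_1}\to\mathcal{B}^{\mu_2}$ is compact if and only if, for every bounded sequence $(f_j)_{j\in{\Bbb N}}$ in $\mathcal{B}^{\mu_1}$ with $f_j\to 0$ uniformly on compact subsets of $\mathbb{D}$, we have $\left\|C_\phi(f_j)\right\|_{\mathcal{B}^{\mu_2}}\to 0$ as $j\to\infty$. This is exactly the assertion of the lemma.
\end{proof}
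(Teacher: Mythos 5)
Your proof is correct and is exactly the paper's argument: the paper dispatches this lemma in one line by ``combining the above lemma and Lemma \ref{le-Tjani},'' which is precisely your assembly of Parts (B), (C), and (D) of the preceding lemma into the three hypotheses of Tjani's Lemma with $X=\mathcal{B}^{\mu_1}$, $Y=\mathcal{B}^{\mu_2}$, and $T=C_\phi$. Your explicit care in attaching the point-evaluation hypothesis to the codomain $\mathcal{B}^{\mu_2}$ matches the typographical caveat the authors themselves flag.
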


\section{The essential norm and hence compactness of $C_\phi$ from ${\mathcal B}^{\log}$ to ${\mathcal B}^{\mu}$}
\subsection{An expression that is equivalent to the essential norm}

The main theorem of this section is the following equivalence
result, which we prove after stating two preliminary lemmas below it.

   \begin{theorem}\label{th-norm-esencial}
    \begin{equation}\label{norm-essent}
     \left\|C_\phi\right\|_e^{{\mathcal B}^{\log}\rightarrow{\mathcal B}^{\mu}}\sim\limsup_{j\to\infty}
     \frac{\left\|\phi^j\right\|_{\mu}}{\left\|F_j\right\|_{\log}},
    \end{equation}
    if $\phi$ is a holomorphic self-map of
    ${\Bbb D}$ and $\mu$ is a weight on ${\Bbb D}$ such that $C_\phi$ is
bounded between ${\mathcal B}^{\log}$ and ${\mathcal B}^{\mu}$.
   \end{theorem}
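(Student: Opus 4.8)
The plan is to prove the equivalence (\ref{norm-essent}) by establishing the two one-sided estimates separately; write $\ell:=\limsup_{j\to\infty}\left\|\phi^j\right\|_{\mu}/\left\|F_j\right\|_{\log}$ throughout.

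\emph{Lower bound.} I would use the normalized monomials $f_j:=F_j/\left\|F_j\right\|_{\log}$. Since $F_j(0)=0$ we have $\left\|f_j\right\|_{\mathcal{B}^{\log}}=1$, and since $\left\|F_j\right\|_{\log}\sim\log(j+1)/e\to\infty$ by Corollary \ref{cor-norma-zn}, the sequence $(f_j)$ tends to $0$ uniformly on compact subsets of ${\Bbb D}$. For any compact operator $K:\mathcal{B}^{\log}\to\mathcal{B}^\mu$, the compact-open compactness of the closed unit ball of $\mathcal{B}^{\log}$ together with the boundedness of point evaluations on $\mathcal{B}^\mu$ (parts (B) and (C) of the lemma preceding Lemma \ref{le-Tj}) force, by the standard argument, $\left\|K f_j\right\|_{\mathcal{B}^\mu}\to 0$. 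On the other hand $C_\phi f_j=\phi^j/\left\|F_j\right\|_{\log}$, so $\left\|C_\phi f_j\right\|_{\mathcal{B}^\mu}=|\phi(0)|^j/\left\|F_j\right\|_{\log}+\left\|\phi^j\right\|_{\mu}/\left\|F_j\right\|_{\log}$, whose $\limsup$ is $\ell$ because the first term vanishes. Therefore $\left\|C_\phi-K\right\|\geq\limsup_j\left\|(C_\phi-K)f_j\right\|_{\mathcal{B}^\mu}\geq\ell$, and taking the infimum over compact $K$ gives $\left\|C_\phi\right\|_e\geq\ell$.

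\emph{Upper bound, construction.} For $s\in(0,1)$ I would approximate $C_\phi$ by $C_{s\phi}$. Each $C_{s\phi}$ is compact by Lemma \ref{le-Tj}: if $(g_j)$ is bounded with $g_j\to 0$ uniformly on compacta, then $g_j'\to 0$ uniformly on $\overline{D}(0,s)$ by Montel's theorem, and since $\sup_z\mu(z)|\phi'(z)|=\|\phi\|_\mu<\infty$ (as $C_\phi$ is bounded, by Theorem \ref{th-acotado}), one gets $\left\|C_{s\phi}g_j\right\|_{\mathcal{B}^\mu}\leq|g_j(s\phi(0))|+s\|\phi\|_\mu\sup_{\overline{D}(0,s)}|g_j'|\to 0$. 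Hence $\left\|C_\phi\right\|_e\leq\liminf_{s\to 1^-}\left\|C_\phi-C_{s\phi}\right\|$.

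\emph{Upper bound, estimate.} Fix $N\in{\Bbb N}$ and, for $f$ in the unit ball of $\mathcal{B}^{\log}$, split the supremum defining $\left\|(C_\phi-C_{s\phi})f\right\|_{\mu}$ according to whether $|\phi(z)|\leq r_N$ or $|\phi(z)|>r_N$, where $(r_j)$ is the sequence (\ref{rks}). On the outer set, $|\phi(z)|\in A^\phi_j$ for some $j\geq N+1$, and writing $[(C_\phi-C_{s\phi})f]'(z)=\phi'(z)[f'(\phi(z))-sf'(s\phi(z))]$ I would bound each of the two resulting terms exactly as in the $\Leftarrow$ direction of Theorem \ref{th-acotado}: using $\mu(z)|\phi'(z)|\leq\left\|\phi^j\right\|_{\mu}/(j|\phi(z)|^{j-1})$, the fact that $v_{\log}(s\phi(z))\geq v_{\log}(\phi(z))$ because $t\log(3/t)$ is increasing, Lemma \ref{le-propied-hk}, and Corollary \ref{cor-norma-zn}, this contribution is at most $C\,(\sup_{j\geq N+1}\left\|\phi^j\right\|_{\mu}/\left\|F_j\right\|_{\log})\,\|f\|_{\log}$. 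On the inner set $\phi(z),s\phi(z)\in\overline{D}(0,r_N)$ and $\mu(z)|\phi'(z)|\leq\|\phi\|_\mu$, while the equicontinuity of $\{f':\left\|f\right\|_{\mathcal{B}^{\log}}\leq 1\}$ on $\overline{D}(0,r_N)$ (a normal family, by part (C) of the lemma preceding Lemma \ref{le-Tj}) makes $\sup_{\|f\|\leq 1}\sup_{w\in\overline{D}(0,r_N)}|f'(w)-sf'(sw)|\to 0$ as $s\to 1^-$; the point-evaluation term $|f(\phi(0))-f(s\phi(0))|$ likewise vanishes uniformly. Letting $s\to 1^-$ with $N$ fixed kills the inner contribution, so $\liminf_{s\to 1}\left\|C_\phi-C_{s\phi}\right\|\leq C\sup_{j\geq N+1}\left\|\phi^j\right\|_{\mu}/\left\|F_j\right\|_{\log}$, and letting $N\to\infty$ gives $\left\|C_\phi\right\|_e\leq C\ell$. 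Combined with the lower bound this proves (\ref{norm-essent}).

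The main obstacle is the upper bound: upgrading the $\sup_{j}$ characterization of Theorem \ref{th-acotado} to a $\limsup_{j}$ requires the simultaneous use of the dilations $C_{s\phi}$ and the region decomposition by the sets $A^\phi_j$, and the genuinely delicate points are (i) controlling the cross term involving $f'(s\phi(z))$ on the outer region by the \emph{same} tail ratio $\sup_{j\geq N+1}\left\|\phi^j\right\|_{\mu}/\left\|F_j\right\|_{\log}$, which is where the monotonicity of $v_{\log}$ and the lower bound on $h_j$ from Lemma \ref{le-propied-hk} are essential, and (ii) ensuring the inner contribution vanishes \emph{uniformly} over the unit ball as $s\to 1^-$. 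On the lower-bound side the only subtlety is the claim $\left\|Kf_j\right\|_{\mathcal{B}^\mu}\to 0$ for compact $K$, which rests on the compact-open compactness of the unit ball of $\mathcal{B}^{\log}$.
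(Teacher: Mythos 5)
Your proof is correct and is essentially the paper's own argument: the same lower bound via the normalized monomials $f_j=F_j/\left\|F_j\right\|_{\log}$ and a limsup estimate against an arbitrary compact $K$, and the same upper bound, since your approximants $C_{s\phi}$ coincide with the paper's $C_\phi K_{r_j}$ (as $C_\phi K_s=C_{s\phi}$), followed by the identical inner/outer splitting along the sets $A^\phi_j$ with Lemma \ref{le-propied-hk}, Corollary \ref{cor-norma-zn}, and the monotonicity of $v_{\log}$ controlling the outer tail. The one non-cosmetic refinement is in your favor: you make the inner-region and point-evaluation terms vanish \emph{uniformly} over the unit ball via equicontinuity of $\{f'\}$ before letting $N\to\infty$, whereas the paper chooses its indices $N'$, $N_l$ for each fixed $f$, so your ordering of the limits closes a uniformity point the paper leaves implicit.
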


\subsection{Two auxiliary facts}

In order to prove the above theorem, we will need the following two
lemmas, which we will prove for the sake of completeness.  For the
first lemma, we will require the following notation: For $r\in
[0,1]$, define the linear {\em dilation operator} $K_r:{\mathcal
H}({\Bbb D})\to\mathcal{H}({\Bbb D})$ by $K_r\, f=f_r$, where $f_r$,
for each $f\in {\mathcal H}({\Bbb D})$, is given by $f_r(z)=f(rz)$.
For more information about this operator, see \cite{GaLiStevic}.

\begin{lem}\label{ksubrfacts}
Let $r\in[0,1]$.  Then the following statements hold:

(A) ${\mathcal B}^{\log}$ is a $K_r$-invariant subspace of
${\mathcal H}({\Bbb D})$; moreoever, we have that

\[\left\|K_r\right\|^{{\mathcal B}^{\log}\rightarrow {\mathcal B}^{\log}}\leq 1.
\]

(B) If $r\not=1$, then $K_r$ is compact on ${\mathcal B}^{\log}$.

\end{lem}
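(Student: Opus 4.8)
The plan is to treat the two parts in order, since Part (B) will build directly on the norm bound established in Part (A). For Part (A), the key observation is that the dilation operator interacts cleanly with differentiation via the chain rule: if $g=K_rf$, then $g'(z)=rf'(rz)$. The strategy is to bound the log-Bloch semi-norm $\|K_rf\|_{\log}=\sup_{z\in{\Bbb D}}v_{\log}(z)\,r\,|f'(rz)|$ by comparing $v_{\log}(z)$ at the point $z$ with $v_{\log}(rz)$ at the ``contracted'' point $rz$. Setting $w=rz$, so that $|w|=r|z|\leq|z|$, I would show that the weight comparison $r\,v_{\log}(z)\leq v_{\log}(rz)$ holds, which would immediately give $v_{\log}(z)\,r\,|f'(rz)|\leq v_{\log}(w)|f'(w)|\leq\|f\|_{\log}$, and hence $\|K_rf\|_{\log}\leq\|f\|_{\log}$ after taking suprema. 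Combined with $|(K_rf)(0)|=|f(0)|$, this yields $\|K_rf\|_{{\mathcal B}^{\log}}\leq\|f\|_{{\mathcal B}^{\log}}$, establishing both $K_r$-invariance and the operator norm bound $\leq 1$.

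The heart of Part (A), then, is verifying the weight inequality $r\,v_{\log}(z)\leq v_{\log}(rz)$ for $r\in[0,1]$ and $z\in{\Bbb D}$. Writing $s=|z|$, this reduces to the one-variable claim that $r(1-s)\log\frac{3}{1-s}\leq(1-rs)\log\frac{3}{1-rs}$. Since the function $t\mapsto t\log\frac{3}{t}$ is increasing and positive for small $t$ (a fact already invoked in the proof of Theorem \ref{th-acotado}), and since $1-rs\geq 1-s$ when $r\leq 1$, monotonicity handles the logarithmic factor; the factor of $r$ on the left is absorbed because $r(1-s)=rs\cdot\frac{1-s}{s}\leq\ldots$ — I anticipate this elementary comparison being the one spot requiring a careful inequality chase, splitting into the factor $\log$-part and the linear part. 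I expect this weight estimate to be the main obstacle, since everything else is formal.

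For Part (B), the plan is to apply Lemma \ref{le-Tjani} (Tjani's Lemma) with $X=Y={\mathcal B}^{\log}$ and $T=K_r$. The hypotheses of that lemma are supplied by the preceding lemma (point evaluations bounded, closed unit ball compact in the compact-open topology, and $K_r$ continuous in that topology, the last following since $f_n\to f$ uniformly on compacta forces $K_rf_n\to K_rf$ uniformly on compacta). It then suffices to take a bounded sequence $(f_j)$ in ${\mathcal B}^{\log}$ with $f_j\to 0$ uniformly on compacta and show $\|K_rf_j\|_{{\mathcal B}^{\log}}\to 0$. Here the crucial point is that $r<1$: the closed disk $\overline{D}(0,r)$ is a compact subset of ${\Bbb D}$, so uniform convergence of $f_j$ to $0$ on this disk (together with the Cauchy-estimate transfer of uniform convergence to the derivatives $f_j'$ on a slightly larger compact disk, via Montel as in Part (C) of the previous lemma) yields $\sup_{|w|\leq r}|f_j'(w)|\to 0$. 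Since $\|K_rf_j\|_{\log}=\sup_z v_{\log}(z)\,r\,|f_j'(rz)|$ only ever samples $f_j'$ at points $w=rz$ with $|w|<r$, and $v_{\log}$ is bounded on ${\Bbb D}$, this supremum is dominated by a constant times $\sup_{|w|\leq r}|f_j'(w)|$, which tends to $0$; likewise $|(K_rf_j)(0)|=|f_j(0)|\to 0$. Thus $\|K_rf_j\|_{{\mathcal B}^{\log}}\to 0$, and Tjani's Lemma delivers compactness of $K_r$. The essential mechanism is that dilation by $r<1$ confines all derivative evaluations to a fixed compact subset of the disk, defeating the boundary degeneracy of the weight $v_{\log}$.
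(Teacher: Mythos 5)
Your proposal is correct, and Part (B) is essentially the paper's own argument: invoke Tjani's Lemma, kill the term at the origin using $K_rf_j(0)=f_j(0)$, and bound the semi-norm by $r\left\|v_{\log}\right\|_\infty\sup_{z\in{\Bbb D}}\left|f_j'(rz)\right|$, which tends to zero precisely because $r<1$ confines the evaluation of $f_j'$ to the compact disk $\overline{D}(0,r)$, where Weierstrass/Montel gives uniform convergence of the derivatives. Part (A), however, is where you genuinely diverge from the paper: the paper does not prove (A) at all, instead citing Stevi\'{c}'s Theorem 1, Part (e) (with $\alpha=\beta=1$) combined with its own Theorem \ref{kthetaeq}, whereas you give a direct, self-contained proof via the pointwise weight inequality $r\,v_{\log}(z)\leq v_{\log}(rz)$. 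Your inequality is true, and in fact simpler than you anticipate: no splitting into a ``$\log$-part and linear part'' is needed, since $r\leq 1$ gives $r\,v_{\log}(z)\leq v_{\log}(z)$, and then $v_{\log}(z)\leq v_{\log}(rz)$ follows from $1-|z|\leq 1-r|z|$ together with the monotonicity of $t\mapsto t\log(3/t)$ on $(0,1]$ (valid since $3\geq e$, a fact the paper already uses in proving Theorem \ref{th-acotado}); the garbled fragment $r(1-s)=rs\cdot\frac{1-s}{s}\leq\ldots$ in your write-up should simply be replaced by this two-step chain. The trade-off: your route makes the lemma independent of the external reference and of Theorem \ref{kthetaeq} (which appears later in the paper, so the paper's citation is in fact slightly awkward, forward-referencing its own Section 6), at the cost of a half-page of elementary computation that the paper avoids by citation.
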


{\it Proof:}

(A): We omit the proof of this part of the lemma, since it can be
obtained by combining \cite[~Thm.~1,~Part~(e)]{Stevic} in the case
$\beta=\alpha=1$ there with Theorem \ref{kthetaeq} in the present
paper.

(B): To prove this part of the Lemma, we can invoke Lemma
\ref{le-Tjani}, provided we can show that for any bounded sequence
$\left\{f_j\right\}$ in $\mathcal{B}^{\log}$ such that
$f_j\to 0$ uniformly on compacta,
    $$
     \left\|K_r\, f_j\right\|_{{\mathcal B}^{\log}}=\left|K_r\,f_j(0)\right|+\sup_{z\in\Bbb D}v_{\log}(z)\left|\left(K_r\,f_j\right)'(z)\right|\to 0
    $$
    as  $j\to\infty$. Indeed, we observe that the first term on the right side of the above equation tends to zero as $j\rightarrow\infty$, since
    $K_r\,f_j(0)=f_j(0)$ and $f_j$ converges to $0$ on compacta in ${\Bbb D}$, by assumption. Furthermore, we have that $f_j'\rightarrow 0$ uniformly on
    compacta
    (cf. \cite[~p.~142-151]{Conw78}).
This fact and boundedness of $v_{\log}$ together imply that
   $$
     \sup_{z\in\Bbb D}v_{\log}(z)\left|\left(K_r\,f_j\right)'(z)\right|=r\sup_{z\in\Bbb D}v_{\log}(z)\left|f_j'(rz)\right|
     \leq r\left\|v_{\log}\right\|_\infty\sup_{z\in\Bbb D}\left|f_j'(rz)\right|\to 0
   $$
   as  $j\to\infty$. The statement in Part (B) of the Lemma follows.

\medskip

\begin{lem}\label{frjto0}
Suppose that $\left(t_j\right)_{j\in{\Bbb W}}$ is an increasing
sequence in $[0,1)$ that converges to $1$, and let $f\in {\mathcal
H}({\Bbb D})$.  Then we have that
$\left(t_j[f']_{t_j}\right)_{j\in{\Bbb W}}$ converges uniformly to
$f'$ on compact subsets of ${\Bbb D}$.
\end{lem}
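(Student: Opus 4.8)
The plan is to recognize that the sequence in question is nothing but the sequence of derivatives of the dilations $K_{t_j}f$, and then to invoke the standard fact that dilations converge uniformly on compacta together with the Weierstrass theorem on convergence of derivatives. First I would record the elementary chain-rule identity
\[
t_j[f']_{t_j}(z)=t_j\,f'(t_j z)=\frac{d}{dz}\bigl[f(t_j z)\bigr]=\bigl(K_{t_j}f\bigr)'(z)\qquad(z\in{\Bbb D}),
\]
which reduces the claim to showing that $\bigl(K_{t_j}f\bigr)'\to f'$ uniformly on compact subsets of ${\Bbb D}$.

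Next I would establish that $K_{t_j}f\to f$ uniformly on compacta. Given a compact $K\subset{\Bbb D}$, I would choose $r\in(0,1)$ with $K\subset\overline{D}(0,r)$; since $t_j\in[0,1)$, the dilated points $t_j z$ also lie in $\overline{D}(0,r)$ for every $z\in K$ and every $j$. Because $f$ is continuous, hence uniformly continuous, on the compact disk $\overline{D}(0,r)$, and because $|t_j z-z|=(1-t_j)|z|\le(1-t_j)r\to 0$ uniformly in $z\in K$, it follows that $\sup_{z\in K}|f(t_j z)-f(z)|\to 0$ as $j\to\infty$; that is, $K_{t_j}f\to f$ uniformly on $K$.

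Finally I would apply the Weierstrass theorem (cf. \cite[~p.~142--151]{Conw78}), which guarantees that a sequence of holomorphic functions converging uniformly on compacta has derivatives converging uniformly on compacta to the derivative of the limit. Combining this with the displayed identity yields $t_j[f']_{t_j}=\bigl(K_{t_j}f\bigr)'\to f'$ uniformly on compacta, as claimed.

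The argument has no genuine obstacle; the only point requiring care is ensuring that the dilated arguments $t_j z$ remain inside a single fixed compact subset of ${\Bbb D}$, so that the uniform-continuity estimate holds uniformly in $j$, which the inequality $|t_j z|\le t_j|z|\le|z|$ secures. A self-contained alternative that bypasses the Weierstrass theorem would be to split $|t_j f'(t_j z)-f'(z)|\le|f'(t_j z)-f'(z)|+(1-t_j)|f'(z)|$ and bound the two terms directly, using uniform continuity of $f'$ and boundedness of $f'$ on $\overline{D}(0,r)$ respectively.
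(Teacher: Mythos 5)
Your proposal is correct, but its primary route is genuinely different from the paper's. The paper works directly at the level of $f'$: it fixes a compact $G\subset\overline{D}(0,r)$, uses uniform continuity of $f'$ on $\overline{D}(0,r)$ to get $|f'-(f')_{t_j}|\to 0$ uniformly there, bounds the remaining piece by $|(f')_{t_j}-t_j(f')_{t_j}|=(1-t_j)|f'|_{t_j}\le(1-t_j)M\to 0$ (with $M$ a bound for $|f'|$ on the disk), and finishes with a put-and-take and an $\varepsilon/2$ triangle-inequality argument. This is precisely the ``self-contained alternative'' you sketch in your final sentence, so your fallback coincides with the paper's actual proof (up to replacing $(1-t_j)|f'(t_jz)|$ by $(1-t_j)|f'(z)|$, which is immaterial). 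Your main argument, by contrast, identifies $t_j[f']_{t_j}$ with $(K_{t_j}f)'$, proves $K_{t_j}f\to f$ uniformly on compacta using only uniform continuity of $f$ itself, and then invokes the Weierstrass theorem on convergence of derivatives; this is sound, since each $K_{t_j}f$ is holomorphic on ${\Bbb D}$, and you correctly handle the one delicate point, namely that the dilated points $t_jz$ remain in the same fixed compact disk so the uniform-continuity estimate is uniform in $j$. What each approach buys: yours delegates the analytic work to a standard theorem (the same one the paper itself cites from Conway in proving Lemma \ref{ksubrfacts}, Part (B)) and needs only continuity of $f$; the paper's direct estimate is more elementary, avoids Weierstrass entirely, and in fact never uses holomorphy of $f'$ beyond its continuity, so it would apply verbatim with $f'$ replaced by an arbitrary continuous function on ${\Bbb D}$.
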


\medskip

{\it Proof:} For completeness, we supply some of the details of the proof:  Let $G$ be a
   compact subset of  $\Bbb D$, and let $\varepsilon>0$.  Then $G\subset\overline{D}(0,r)$, where ${\overline{D}}(0,r)$ denotes the Euclidean disk with
   center at the origin in
   ${\Bbb C}$ and radius  $r\in [0,1)$.  Since $f\in{\mathcal H}({\Bbb D})$, it follows that $f'$ is uniformly continuous on ${\overline D}(0,r)$.  One
   verifies, therefore, that $|f'-(f')_{t_j}|\rightarrow 0$ uniformly on ${\overline D}(0,r)$.  Furthermore,
   \[|(f')_{t_j}-t_j(f')_{t_j}|=(1-t_j)|f'|_{t_j}\leq (1-t_j)M\rightarrow 0{\text{ as }}j\rightarrow\infty,
    \]
    so the proof of the lemma can be completed by a straightforward put-and-take, followed by an $\varepsilon/2$-argument involving the triangle
    inequality.

\subsection{Proof of the main essential norm result}

We are now prepared to complete the proof of Theorem
\ref{th-norm-esencial}:

\medskip

\noindent{\it Proof}: Suppose that
$\phi:{\Bbb D}\rightarrow{\Bbb
   D}$, and assume that $\phi$ is holomorphic.  If $\phi$ is the zero function, then the statement of the theorem holds trivially.  Therefore, we can assume
   throughout the sequel that $\phi$ is not the zero function.  Since $||\cdot||_{{\mathcal B}^\mu}$ is a norm, it follows that
   $||\phi||_{{\mathcal B}^{\mu}}>0$. Let $\mu$ be a weight on ${\Bbb D}$.  We set
    \[E:=\limsup_{j\to\infty} \frac{\left\|\phi^j\right\|_{\mu}}{\left\|F_j\right\|_{\log}},
    \]
    which is a finite, non-negative real number, by Theorem
    \ref{th-acotado} and the fact that the monomial functions $F_j\in{\mathcal B}^{\log}$ for all $j\in{\Bbb N}$.  Let $
    K:\mathcal{B}^{\log}\to{\mathcal B}^{\mu}$ be linear and also compact, and define the normalized monomial function sequence
    $(f_j)_{j\in{\Bbb N}}$ in
${\mathcal B}^{\log}$ by
    \[f_j:=\frac{F_j}{\left\|F_j\right\|_{\log}}.
\]
We note that
\begin{equation}\label{fjto0}
f_j\rightarrow 0{\text{ uniformly on compacta in }}{\Bbb D} {\text{
as }}j\rightarrow\infty.
\end{equation}
Since the reverse triangle inequality holds for seminorms, we have
that
\begin{eqnarray*}
\left\|C_\phi\,f_j\right\|_{\mu}-\left\|K\,f_j\right\|_{\mu}
&\leq &
\left\|C_\phi\,f_j-K\,f_j\right\|_{\mu}\\
& \leq & \left\|C_\phi\,f_j-K\,f_j\right\|_{{\mathcal B}^{\mu}}\\
&\leq & ||C_\phi-K||\,||f_j||_{{\mathcal B}^{\log}}\\
&=&||C_\phi-K||\,||f_j||_{\log}\\
&=&||C_\phi-K||.
\end{eqnarray*}
Combining this estimate and the equations
\[
\left\|C_\phi\,f_j\right\|_{\mu}=\left\|C_\phi\left(\frac{F_j}{||F_j||_{\log}}\right)\right\|_{\mu}
   =\frac{1}{||F_j||_{\log}}\left\|C_\phi F_j\right\|_{\mu}
=\frac{\left\|\phi^j\right\|_{\mu}}{\left\|F_j\right\|_{\log}},
\]
we obtain the following inequality:
\[
\frac{\left\|\phi^j\right\|_{\mu}}{\left\|F_j\right\|_{\log}}-\left\|K\,f_j\right\|_{\mu}\leq
||C_\phi-K||.
\]
By taking the $\limsup$ of both sides of the above inequality as
$j\to\infty$ and using Relation (\ref{fjto0}) above along with Lemma
\ref{le-Tj}, we can conclude that
\[
\left\|C_\phi-K\right\|\geq E.
\]
Therefore, we have that
\begin{equation}\label{lowerbd4e}
\left\|C_\phi\right\|_e^{{\mathcal B}^{\log}\to{\mathcal
B}^{\mu}}\geq E.
\end{equation}
Inequality (\ref{lowerbd4e}) implies that the proof of the theorem
will be complete if we can show that the left hand side of the
inequality is bounded above by the product of a constant and $E$.
First, we record the following fact for use later in the proof:
Corollary \ref{cor-norma-zn} implies that we can, in particular,
find an $N\in\Bbb N$ such that
   \begin{equation}\label{des-kN}
    \frac{\left\|F_m\right\|_{\log}}{\log(m+1)}<\frac{3}{2\,e}{\text{ for all }}m\in{\Bbb N}{\text{ such that }}m\geq N.
   \end{equation}
As we noted after we defined it in Equation (\ref{rks}), the
sequence $(r_j)_{j\in{\Bbb W}}$ satisfies $r_j\in[0,1)$ for all
$j\in{\Bbb W}$, so Lemma \ref{ksubrfacts}, Part (B) implies that
\begin{equation}\label{krsubjcompact}
{\text{For all }}j\in{\Bbb W}\,\,\,K_{r_j}{\text{ is compact on }}{\mathcal B}^{\log}.
\end{equation}
This fact and
\cite[~p.~178,~Prop.~3.5]{Conw90} together imply that
\begin{equation}\label{cphiksubrsubjcompact}
C_\phi\,K_{r_j}:\mathcal{B}^{\log}\to\mathcal{B}^{\mu}{\text{ is compact for all }}j\in\Bbb W.
\end{equation}
By Lemma \ref{ksubrfacts}, Part (A), we have that
\begin{equation}\label{ksubrsubjunitball}
||K_{r_j}||^{{\mathcal B}^{\log}\rightarrow{\mathcal B}^{\log}}\leq
1{\text{ for all }}j\in{\Bbb W}.
\end{equation}
Also, for all $j\in{\Bbb W}$, we have that
\begin{eqnarray}
         ||C_\phi||_e^{{\mathcal B}^{\log}\to{\mathcal B}^{\mu}({\Bbb
         D})}
         &=&\inf\left\{||C_\phi-K||{\text{ such that }}K:{\mathcal B}^{\log}\to{\mathcal B}^{\mu}{\text{ is compact}}\right\}
         \nonumber\\
         &\leq & ||C_\phi-K_j||\nonumber\\
         &= &\sup_{\{f\in{\mathcal B}^{\log}:||f||_{{\mathcal B}^{\log}}\leq
         1\}}
         \left\|\left(C_\phi-C_\phi\,K_{r_j}\right)f\right\|_{{\mathcal B}^{\mu}}\label{finalyay}.
         \end{eqnarray}
Thus the proof will be complete if we can show that the norm inside
the above supremum is bounded above by the product of a constant
that does not depend on the choice of $f\in{\mathcal B}^{\log}$ and
$E$. We will break up the norm inside the supremum above into three
pieces, each of which we will show is bounded above by a constant
times $E$. Suppose for the moment that $f\in\mathcal{B}^{\log}$ and
that $\left\|f\right\|_{{\mathcal B}^{\log}}\leq 1$. Since $r_j\to
1^-$ as $j\to\infty$ and each $f_j$ is continuous, then for any
$\varepsilon>0$, we can choose $N'\in{\Bbb W}$ such that
\begin{equation}\label{estimate1of3}
{\text{for all }}j\in{\Bbb W}{\text{ such that }}j\geq
N',\,\,\,\left|f\left[\phi(0)\right]-f\left[r_j\phi(0)\right]\right|<E.
\end{equation}
Since $r_j\in[0,1)$ for all $j\in{\Bbb W}$ and
$\left(r_j\right)_{j\in{\Bbb W}}$ is increasing with limit $1$, we
have by Lemma \ref{frjto0} that
$\left(r_j[f']_{r_j}\right)_{j\in{\Bbb W}}$ converges to $0$
uniformly on compacta in ${\Bbb D}$.  Since  $\{w\in{\Bbb D}:
|w|\leq r_l\}$ is compact, $\left(r_j[f']_{r_j}\right)_{j\in{\Bbb
W}}$ converges to $0$ uniformly on $\{w\in{\Bbb D}: |w|\leq r_l\}$
for each $l\in{\Bbb W}$, and $||\phi||_{{\mathcal B}^{\mu}}>0$, then
for all $l\in{\Bbb W}$, we can find an $N_l\in{\Bbb W}$ such that
for all $j\in{\Bbb W}$ with $j\geq N_l$ and all $z\in{\Bbb D}$ such
that $|\phi(z)|\leq r_l$, we have
\[
\left|r_j(f')_{r_j}\left[\phi(z)\right]-f'\left[\phi(z)\right]\right|<
\frac{E}{||\phi||_{{\mathcal B}^{\mu}}}.
\]
Thus for all $l\in{\Bbb W}$ and for all $j\in{\Bbb W}$ such that
$j\geq N_l$, we have that
\[
\sup_{z\in{\Bbb D}:|\phi(z)|\leq
r_l}\mu(z)\left|r_j(f')_{r_j}\left[\phi(z)\right]-f'\left[\phi(z)\right]\right|\left|\phi'(z)\right|<
||\phi||_{\mu}\frac{E}{||\phi||_{{\mathcal B}^{\mu}}}\leq E.
\]
We record the above statement more briefly below for use later:
\begin{equation}\label{gettinglate}
\sup_{z\in{\Bbb D}:|\phi(z)|\leq
r_l}\mu(z)\left|r_j(f')_{r_j}\left[\phi(z)\right]-f'\left[\phi(z)\right]\right|\left|\phi'(z)\right|<E{\text{
for all }}j\in{\Bbb W}{\text{ such that }}j\geq N_l.
\end{equation}
On the other hand, we have that
\begin{equation}\label{gettingthere11}
   {\text{for all }}l,j\in{\Bbb W},\,\,\,\sup_{\left|\phi(z)\right|> r_l}\mu(z)\left|f'\left[\phi(z)\right]-r_jf'\left[r_j\phi(z)\right]\right|
   \left|\phi'(z)\right|\leq s_l(1)+s_l(r_j),
\end{equation}
  where for $\rho\in[0,1]$, the expression $s_l(\rho)$ is given by
  $$
  s_l(\rho)= \sup_{\left|\phi(z)\right|>
  r_l}\mu(z)\left|f'\left[\rho\phi(z)\right]\right|\left|\phi'(z)\right|,
  $$
if this quantity is finite for all $\rho\in[0,1]$.  Indeed, this
quantity is finite for all such $\rho$, as we will now prove.

Assume now that $\rho\in[0,1]$ and estimate $s_l(\rho)$ in this
case. For such values of $\rho$ and all $l\in{\Bbb W}$ such that
$l\geq N$, we deduce that $s_l(\rho)$ is no larger than
\begin{small}
\begin{eqnarray*}
   & &\sup_{z\in \bigcup_{m=l+1}^\infty A_m^{\phi}}\mu(z)\left|f'\left[\rho\phi(z)\right]\right|\left|\phi'(z)\right|\\
   &\leq &\sup_{z\in \bigcup_{m=l}^\infty A_m^{\phi}}\mu(z)\left|f'\left[\rho\phi(z)\right]\right|\left|\phi'(z)\right|\\
   & \leq&\sup_{m\geq l}\sup_{z\in A_m^{\phi}}\mu(z)\left|f'\left[\rho\phi(z)\right]\right|\left|\phi'(z)\right|
    \frac{m\left|\phi(z)\right|^{m-1}\log(m+1)v_{\log}\left[\rho\phi(z)\right]\left\|F_m\right\|_{\log}}
    {m\left|\phi(z)\right|^{m-1}\log(m+1)v_{\log}\left[\rho\phi(z)\right]\left\|F_m\right\|_{\log}}\\
    &<&\frac{3}{2e}\sup_{m\geq l}\sup_{z\in A_m^{\phi}}\mu(z)\left|f'\left[\rho\phi(z)\right]\right|\left|\phi'(z)\right|
    \frac{m\left|\phi(z)\right|^{m-1}\log(m+1)v_{\log}\left[\rho\phi(z)\right]}
    {m\left|\phi(z)\right|^{m-1}v_{\log}\left[\rho\phi(z)\right]\left\|F_m\right\|_{\log}}\\
    &=&\frac{3}{2e}\sup_{m\geq l}\sup_{z\in A_m^{\phi}}\left|f'\left[\rho\phi(z)\right]\right|\mu(z)|m[\phi(z)]^{m-1}\phi'(z)|
    \frac{\log(m+1)v_{\log}\left[\rho\phi(z)\right]}
    {m\left|\phi(z)\right|^{m-1}v_{\log}\left[\rho\phi(z)\right]\left\|F_m\right\|_{\log}}\\
    &=&\frac{3}{2e}\sup_{m\geq l}\sup_{z\in A_m^{\phi}}\left|f'\left[\rho\phi(z)\right]\right|v_{\log}[\rho\phi(z)]\mu(z)|(\phi^m)'(z)|
    \frac{\log(m+1)}
    {m\left|\phi(z)\right|^{m-1}v_{\log}\left[\rho\phi(z)\right]\left\|F_m\right\|_{\log}}\\
    &\leq&\frac{3||f||_{{\mathcal B}^{\log}}}{2e}\sup_{m\geq l}\frac{||\phi^m||_{\mu}}{||F_m||_{\log}}\sup_{z\in A_m^{\phi}}\frac{\log(m+1)}
    {m\left|\phi(z)\right|^{m-1}v_{\log}\left[\rho\phi(z)\right]}\\
     &\leq&\frac{3}{2e}\sup_{m\geq l}\frac{||\phi^m||_{\mu}}{||F_m||_{\log}}\sup_{z\in A_m^{\phi}}
    \frac{\log(m+1)}
    {m\left|\phi(z)\right|^{m-1}v_{\log}\left[\rho\phi(z)\right]}\\
 &\leq&\frac{3}{2e}\sup_{m\geq l}\frac{||\phi^m||_{\mu}}{||F_m||_{\log}}\sup_{z\in A_m^{\phi}}
    \frac{\log(m+1)}
    {m\left|\phi(z)\right|^{m-1}v_{\log}\left[\phi(z)\right]}\\
    &=&\frac{3}{2e}\sup_{m\geq l}\frac{||\phi^m||_{\mu}}{||F_m||_{\log}}\sup_{z\in A_m^{\phi}}
    \frac{1}{h_m[\phi(z)]}\\
    &<&\left(\frac{2e^L}{L}\right)\left(\frac{3}{2e}\right)\sup_{m\geq l}\frac{||\phi^m||_{\mu}}{||F_m||_{\log}}\\
    &=&\frac{3e^{L-1}}{L}\sup_{m\geq l}\frac{||\phi^m||_{\mu}}{||F_m||_{\log}}\\
    &<&\infty.
\end{eqnarray*}
\end{small}
In the above chain of relations, the third inequality follows from
Inequality
  (\ref{des-kN}).  By definition of $||\cdot||_{{\mathcal B}^{\log}}$, the fourth inequality above is obtained, and the fifth inequality holds by the
  assumption that
  $||f||_{{\mathcal B}^{\log}}\leq 1$.  The sixth inequality above follows from the fact that the continuous extension to $[0,1]$ of $\mu_3$ is
  increasing on $[0,1]$, and the seventh inequality is a consequence of Lemma
  \ref{le-propied-hk}.  Hence, $s_l$
  for $l\geq N$ is a well-defined, real-valued function on $[0,1]$, as claimed,
  and we now have one of three estimates that are needed to complete the proof of the
  theorem.

Note in particular by separate estimation of $s_l(1)$ and
$s_l(r_j)$, which are bounded by the second-from-the-bottom quantity
in the above large chain of inequalities, that for sufficiently
large $l$ and any $j\in{\Bbb W}$,
\begin{eqnarray}
s_l(1)+ s_l(r_j)&\leq & \frac{3e^{L-1}}{L}\sup_{m\geq
l}\frac{||\phi^m||_{\mu}}{||F_m||_{\log}}+
\frac{3e^{L-1}}{L}\sup_{m\geq l}\frac{||\phi^m||_{\mu}}{||F_m||_{\log}}\nonumber\\
              &=&\frac{6e^{L-1}}{L}\sup_{m\geq l}\frac{||\phi^m||_{\mu}}{||F_m||_{\log}}. \label{downstairs}
\end{eqnarray}
Now, for all $j,l\in{\Bbb W}$, we have that
$\displaystyle{\left\|\left(C_\phi-C_\phi\,K_{r_j}\right)f\right\|_{{\mathcal
B}^{\mu}}}$
   \begin{eqnarray}
    &=&\left\|\left(f-f_{r_j}\right)\circ\phi\right\|_{{\mathcal B}^{\mu}}\nonumber\\
    &=&\left|f\left[\phi(0)\right]-f\left[r_j\phi(0)\right]\right|
        +\sup_{z\in\Bbb
        D}\mu(z)\left|f'\left[\phi(z)\right]-r_jf'\left[r_j\phi(z)\right]\right|\left|\phi'(z)\right|\nonumber\\
        &\leq & \left|f\left[\phi(0)\right]-f\left[r_j\phi(0)\right]\right|
        +E^l_{j,1}(f)+E^l_{j,2}(f),\label{inahurry}
   \end{eqnarray}
where
\[E^l_{j,1}(f):=\sup_{|\phi(z)|\leq r_l}\mu(z)\left|f'\left[\phi(z)\right]-r_jf'\left[r_j\phi(z)\right]\right|\left|\phi'(z)\right|\]
        and
        \[
        E^l_{j,2}(f):=
        \sup_{|\phi(z)|>r_l}\mu(z)\left|f'\left[\phi(z)\right]-r_jf'\left[r_j\phi(z)\right]\right|\left|\phi'(z)\right|.
\]
Quantity (\ref{inahurry}) above can be rewritten as
\begin{equation}\label{tired}
\left|f\left[\phi(0)\right]-f\left[r_j\phi(0)\right]\right|   +E^l_{j,1}(f)+s_l(1)+s_l(r_j).
\end{equation}
Now let $l\in{\Bbb W}$ satisfy $l\geq N$, and let $j\in{\Bbb W}$ satisfy $j\geq N'_l:=\max\{N_l,N'\}$.

Indeed, by Inequality (\ref{inahurry}), which is bounded above by Quantity (\ref{tired}), we have that
\begin{eqnarray}
\left\|\left(C_\phi-C_\phi\,K_{r_j}\right)f\right\|_{{\mathcal
B}^{\mu}}
&\leq &\left|f\left[\phi(0)\right]-f\left[r_j\phi(0)\right]\right|+E^l_{j,1}(f)+E^l_{j,2}(f)\nonumber\\
&=&\left|f\left[\phi(0)\right]-f\left[r_j\phi(0)\right]\right|+E^l_{j,1}(f)+s_l(1)+s_l(r_j)\nonumber\\
&<&E+E^l_{j,1}(f)+s_l(1)+s_l(r_j)\label{outtahereyet?}\\
&< &E+E+s_l(1)+s_l(r_j)\label{lookinggood}\\
&\leq &E+E+ \frac{6e^{L-1}}{L}\sup_{m\geq l}\frac{||\phi^m||_{\mu}}{||F_m||_{\log}}\label{upstairs}\\
&=&2E+ \frac{6e^{L-1}}{L}\sup_{m\geq
l}\frac{||\phi^m||_{\mu}}{||F_m||_{\log}}.\nonumber
\end{eqnarray}
Inequality (\ref{outtahereyet?}) follows from Statement
(\ref{estimate1of3}), and Inequality (\ref{lookinggood}) is due to
Statement (\ref{gettinglate}).  Inequality (\ref{upstairs}) follows
from Equation (\ref{downstairs}), together with the inequality that
precedes it. Since the chain of inequalities above holds for all
$l\in{\Bbb W}$ such that $l\geq N$ and in turn for the $j$'s in
${\Bbb W}$ such that $j\geq N'_l$, we can conclude, by Equation
(\ref{finalyay}) and the above estimates, that the essential norm of
$C_\phi$ is bounded above by
\[
\frac{2L+6e^{L-1}}{L}E.
\]
This completes the proof of the theorem
\hfill $\blacksquare$

\subsection{A characterization of the symbols generating compact composition operators}

Theorem \ref{th-norm-esencial} and Corollary \ref{cor-norma-zn}
together immediately imply the following characterization of
analytic symbols $\phi$ that generate compact
$C_\phi:\mathcal{B}^{\log}\to \mathcal{B}^\mu$, thus extending
results in \cite{WZheZhu09,Zhao10}.
 \begin{theorem}\label{th-gen-MM}
Let $\mu$ be a weight on ${\Bbb D}$.
 Suppose that $\phi:{\Bbb D}\to{\Bbb D}$ is
  holomorphic.  Then $C_\phi:\mathcal{B}^{\log}\to \mathcal{B}^\mu$ is compact if and only if either of the following equations holds:

\begin{eqnarray*}
  \lim_{j\to \infty}\frac{\left\|\phi^j\right\|_{\mu}}{\log(j+1)}&=&0.\\
    \lim_{j\to \infty}\frac{\left\|\phi^j\right\|_{\mu}}{\left\|F_j\right\|_{\log}}&=&0.
   \end{eqnarray*}

 \end{theorem}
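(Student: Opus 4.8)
The plan is to deduce this result directly from the essential norm formula in Theorem \ref{th-norm-esencial} together with the asymptotic estimate of Corollary \ref{cor-norma-zn}, using the basic fact recorded earlier in the paper that a bounded operator is compact precisely when its essential norm vanishes.

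First I would establish the equivalence of the two displayed limit conditions, since this reduces the theorem to a single compactness criterion. By Corollary \ref{cor-norma-zn}, $\left\|F_j\right\|_{\log}\simeq \frac{1}{e}\log(j+1)$ as $j\to\infty$, so the quotient $\frac{\left\|\phi^j\right\|_{\mu}}{\left\|F_j\right\|_{\log}}$ and the quotient $\frac{\left\|\phi^j\right\|_{\mu}}{\log(j+1)}$ differ by a factor tending to the nonzero constant $e$; hence one of them tends to $0$ if and only if the other does. This reduces the theorem to proving that $C_\phi$ is compact if and only if $\lim_{j\to\infty}\frac{\left\|\phi^j\right\|_{\mu}}{\left\|F_j\right\|_{\log}}=0$.

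For the forward implication, I would note that if $C_\phi$ is compact then it is in particular bounded, so Theorem \ref{th-norm-esencial} applies and gives $\left\|C_\phi\right\|_e^{\mathcal{B}^{\log}\to\mathcal{B}^\mu}\sim \limsup_{j\to\infty}\frac{\left\|\phi^j\right\|_{\mu}}{\left\|F_j\right\|_{\log}}$. Since a compact operator has zero essential norm, the equivalence forces $\limsup_{j\to\infty}\frac{\left\|\phi^j\right\|_{\mu}}{\left\|F_j\right\|_{\log}}=0$; as the terms of this sequence are non-negative, the $\limsup$ being $0$ is the same as the ordinary limit being $0$. For the reverse implication, if $\lim_{j\to\infty}\frac{\left\|\phi^j\right\|_{\mu}}{\left\|F_j\right\|_{\log}}=0$, then in particular $\sup_{j\in{\Bbb N}}\frac{\left\|\phi^j\right\|_{\mu}}{\left\|F_j\right\|_{\log}}<\infty$, so Theorem \ref{th-acotado} guarantees that $C_\phi$ is bounded and Theorem \ref{th-norm-esencial} again applies; the vanishing limit, being equal to the vanishing $\limsup$, forces the essential norm to be $0$, whence $C_\phi$ is compact.

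There is essentially no deep obstacle here, as the entire analytic content has been front-loaded into Theorem \ref{th-norm-esencial}. The only points requiring care are (i) securing boundedness before invoking the essential norm formula, which is handled separately in each direction as indicated above, and (ii) the harmless but necessary observation that for a non-negative sequence the $\limsup$ vanishes exactly when the ordinary limit does, which is precisely what lets one upgrade the $\limsup$ appearing in the essential norm formula to the $\lim$ appearing in the statement.
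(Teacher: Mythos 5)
Your proposal is correct and follows exactly the paper's intended route: the paper derives Theorem \ref{th-gen-MM} as an immediate consequence of Theorem \ref{th-norm-esencial} and Corollary \ref{cor-norma-zn}, which is precisely your argument. Your explicit handling of boundedness in the reverse direction via Theorem \ref{th-acotado}, and the observation that a vanishing $\limsup$ of a non-negative sequence coincides with a vanishing limit, are exactly the details the paper leaves implicit.
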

\vspace{0.2cm}

\subsection{Essential norms of $C_\phi$ from the log-Bloch to $\alpha$-Bloch spaces}

The following result establishing essential norm equivalences and characterizing compact $C_\phi$ from the log-Bloch space to $\alpha$-Bloch spaces is a direct consequence of Theorem \ref{th-norm-esencial}:
\begin{corollary} Let  $\phi:\Bbb D\to\Bbb D$ be analytic, and suppose that $\alpha\geq 0$.  Then the
following statements hold.
\begin{enumerate}
 \item  The essential norm of the continuous operator   $C_\phi:\mathcal{B}^{\log}\to\mathcal{B}^\alpha$  satisfies
    \begin{equation*}
     \left\|C_\phi\right\|_e\sim\limsup_{j\to\infty} \frac{\left\|\phi^j\right\|_\alpha}{\left\|F_j\right\|_{\log}}.
    \end{equation*}
  In particular, this operator is compact if and only if
    \begin{equation*}
     \lim_{j\to\infty} \frac{\left\|\phi^j\right\|_\alpha}{\left\|F_j\right\|_{\log}}=0.
    \end{equation*}

 \item The essential norm of the continuous operator  $C_\phi:\mathcal{B}^{\log}\to\mathcal{B}^{\log}$  satisfies
    \begin{equation*}
     \left\|C_\phi\right\|_e\sim\limsup_{j\to\infty} \frac{\left\|\phi^j\right\|_{\log}}{\left\|F_j\right\|_{\log}}.
\end{equation*}
    In particular, this operator is compact if and only if
    \begin{equation*}
     \lim_{j\to\infty} \frac{\left\|\phi^j\right\|_{\log}}{\left\|F_j\right\|_{\log}}=0.
    \end{equation*}
 \end{enumerate}
\end{corollary}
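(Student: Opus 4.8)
The plan is to derive both parts as immediate specializations of Theorem~\ref{th-norm-esencial}, applied to two particular choices of the weight $\mu$; the only genuine work is to confirm that each chosen $\mu$ is a weight on ${\Bbb D}$ in the sense fixed in the introduction (continuous, strictly positive, and bounded). Once that is checked, the essential-norm equivalences are automatic, and the compactness criteria follow from the fact recorded in the essential-norm subsection, namely that a continuous operator has zero essential norm precisely when it is compact.

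For Part (1), I would take $\mu(z)=\left(1-|z|^2\right)^\alpha$. This $\mu$ is continuous and strictly positive on ${\Bbb D}$ because $1-|z|^2\in(0,1]$ there, and it is bounded because the hypothesis $\alpha\geq 0$ forces $\left(1-|z|^2\right)^\alpha\leq 1$; hence $\mu$ is a weight. By the notational convention fixed in the introduction we then have $\|\cdot\|_\mu=\|\cdot\|_\alpha$ and ${\mathcal B}^\mu={\mathcal B}^\alpha$, so Theorem~\ref{th-norm-esencial} yields the claimed equivalence $\|C_\phi\|_e\sim\limsup_{j\to\infty}\|\phi^j\|_\alpha/\|F_j\|_{\log}$ at once. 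The compactness statement then follows: since $\|C_\phi\|_e=0$ if and only if $C_\phi$ is compact, and the equivalence gives $\|C_\phi\|_e=0$ exactly when the $\limsup$ vanishes, nonnegativity of the terms $\|\phi^j\|_\alpha/\|F_j\|_{\log}$ allows the $\limsup$ to be replaced by $\lim$.

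For Part (2), I would instead take $\mu=v_{\log}$ as defined in Equation~(\ref{def-peso}). Here continuity is clear, and strict positivity holds because $1-|z|>0$ gives $\log\left(\frac{3}{1-|z|}\right)\geq\log 3>0$ on ${\Bbb D}$; boundedness follows from the observation, already invoked in the proof of Theorem~\ref{th-acotado}, that $t\mapsto t\log(3/t)$ is increasing on $(0,1]$, whence $v_{\log}(z)\leq\log 3$. Thus $v_{\log}$ is a weight, ${\mathcal B}^\mu={\mathcal B}^{\log}$ and $\|\cdot\|_\mu=\|\cdot\|_{\log}$, so the conclusion of Part (2) is read off from Theorem~\ref{th-norm-esencial} together with the same $\limsup$-to-$\lim$ argument used above.

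I expect no serious obstacle, as this is a true corollary rather than a new theorem: the sole point demanding care is verifying that $\left(1-|z|^2\right)^\alpha$ and $v_{\log}$ meet the boundedness requirement in the definition of a weight, since everything else is a direct substitution into the already-established essential-norm formula of Theorem~\ref{th-norm-esencial}.
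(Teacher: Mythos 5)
Your proposal is correct and matches the paper's route exactly: the paper offers no separate argument, simply declaring the corollary a direct consequence of Theorem~\ref{th-norm-esencial}, which is precisely your specialization of the weight $\mu$ to $\left(1-|z|^2\right)^\alpha$ and to $v_{\log}$. Your verification that each choice is indeed a weight, and the $\limsup$-to-$\lim$ observation via nonnegativity and the vanishing of the essential norm, are exactly the (implicit) details the paper leaves to the reader.
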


\section{A pairwise norm-equivalent family of generalized log Bloch spaces}

In this section, we observe that our results concerning composition
operators can be extended to a more general family of spaces that
include the log Bloch space. In addition, this general family of
logarithmic Bloch-type spaces can be defined on the unit ball ${\Bbb
B}_n$ of ${\Bbb C}^n$ induced by the Euclidean inner product, and we
will point out that these spaces are pairwise norm-equivalent to
each other and the log Bloch space for all $n\in{\Bbb N}$.  If $\mu$
is a continuous positive function on ${\Bbb B}_n$, then the {\em
$\mu$-Bloch space ${\mathcal B}^\mu({\Bbb B}_n)$} is defined to be
the Banach space of holomorphic functions on $f$ on ${\Bbb B}_n$
such that
\[
b_\mu^f:=\sup_{z\in {\Bbb B}_n}\mu(z)|(\nabla f)(z)|
\]
is finite, the Banach space structure arising from the norm given by
\[
||f||_{{\mathcal B}^{\mu}({\Bbb B}_n)}:=|f(0)|+b_\mu^f,
\]
which we denote more briefly by $||f||_{\mathcal{B}^\mu}$ as in the case $n=1$.

Suppose that $\theta>1$ and that $k=1$ or $2$. Then we define the
{\em $(k,\theta)$-log Bloch space} ${\mathcal
B}^{\log}_{k,\theta}({\Bbb B}_n)$ to be ${\mathcal B}^\mu({\Bbb
B}_n)$, where the weight $\mu:=v_{\theta}^{(k)}$, and
$v_{\theta}^{(k)}$ is in turn given by
$v_{\theta}^{(k)}(z)=(1-|z|^k)\log\frac{\theta}{1-|z|^k}$.  For the
sake of brevity, we denote the norm of $f\in{\mathcal
B}^{\log}_{k,\theta}({\Bbb B}_n)$ here by $||f||_{k,\theta}$.
 Also, consistent with standard notation in the case $n=1$, we adopt the notation ${\mathcal B}^{\log}({\Bbb B}_n):={\mathcal B}^{\log}_{1,3}({\Bbb
B}_n)$ and call this space the {\em log Bloch space of ${\Bbb
B}_n$}.

The goal of this section is to prove a norm equivalence result from
which we will be be able to extend, in the section that follows, the
main results of this paper to the $(k,\theta)$-log Bloch spaces
defined above, with the more stringent condition $\theta\geq e$,
although we will prove the above-mentioned pairwise norm-equivalence
among these spaces on ${\Bbb B}_n$ for all $n\in{\Bbb N}$, not just
on ${\Bbb D}$.  We leave open the question of whether analogues of
our main results on composition operators hold in the case of ${\Bbb
B}_n$; moreover, after our results were obtained, S. Stevi\'{c}
pointed out to us the papers \cite{Listevic}, \cite{Stevic},
\cite{Stevic2}, \cite{StevicAgarwal}, and \cite{Stevic3}, in which
composition operators (and some natural generalizations of them) are
considered on spaces formed by replacing the ``$(1-|z|)$" and the
logarithm in the definition of the norm on ${\mathcal
B}^{\log}_{k,\theta}({\Bbb B}_n)$ in the special case $k=1$ by
various respective powers of these quantities. The present paper can
be viewed as complementing, in some ways, a number of these results
by Stev\'{i}c and his collaborators.

To prove the pairwise norm-equivalence of the
$(k,\theta)$-logarithmic Bloch spaces for suitable $(k,\theta)$, we
first need the following auxiliary fact:

\begin{lem}\label{unsquare}
Suppose that $\mu:(0,1)\rightarrow(0,\infty)$ is increasing, and
assume that for all $t\in(0,1)$, we have that
\begin{equation}\label{halve}
\mu\left(\frac{1}{2} t\right)\geq \frac{1}{2} \mu(t).
\end{equation}
Then $\mu(1-t)\sim\mu(1-t^2)$.
\end{lem}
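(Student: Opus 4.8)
The plan is to prove the two-sided equivalence by exhibiting a single constant $K$ (in fact $K=2$) with $\frac1K\mu(1-t)\le\mu(1-t^2)\le K\mu(1-t)$ for every $t\in(0,1)$. One inequality is free: since $t\in(0,1)$ forces $t^2\le t$ and hence $1-t\le 1-t^2$, monotonicity of $\mu$ gives $\mu(1-t)\le\mu(1-t^2)$ immediately. So the entire content lies in the upper estimate $\mu(1-t^2)\le 2\mu(1-t)$.

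First I would rewrite the hypothesis (\ref{halve}) in the equivalent form $\mu(x)\le 2\mu(x/2)$, valid for every $x\in(0,1)$. The natural first attempt at the upper bound uses the factorization $1-t^2=(1-t)(1+t)$ together with $1+t<2$, so that $1-t^2<2(1-t)$; combining monotonicity with $\mu(2(1-t))\le 2\mu(1-t)$ would then finish. This argument works cleanly whenever $2(1-t)<1$, i.e. whenever $t>1/2$, because in that range $2(1-t)$ still lies in the domain $(0,1)$ on which $\mu$ is controlled.

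The main obstacle is the complementary regime $t\le 1/2$ (equivalently, $1-t$ and $1-t^2$ both close to $1$), where $2(1-t)\ge 1$ escapes the domain of $\mu$ and the doubling step is unavailable. The observation that resolves this is that the halving hypothesis alone forces $\mu$ to be \emph{bounded} on $[1/2,1)$: for any $s\in[1/2,1)$ one has $s/2\in[1/4,1/2)$, whence $\mu(s)\le 2\mu(s/2)\le 2\mu(1/2)$ by the rewritten hypothesis followed by monotonicity. Granting this, the case $t\le 1/2$ becomes immediate, since then $1-t^2\in[1/2,1)$ yields $\mu(1-t^2)\le 2\mu(1/2)$, while $1-t\ge 1/2$ yields $\mu(1-t)\ge\mu(1/2)$, and therefore $\mu(1-t^2)\le 2\mu(1-t)$.

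Assembling the two cases shows that $K=2$ works uniformly in $t\in(0,1)$, which together with the easy lower bound establishes $\mu(1-t)\sim\mu(1-t^2)$. I expect the only genuinely nontrivial point to be recognizing that the hypothesis secretly bounds $\mu$ near $1$; after that the proof is just a case split combined with the elementary estimate $1-t^2<2(1-t)$.
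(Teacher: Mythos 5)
Your proof is correct: both cases are handled validly, and the observation that the halving hypothesis forces $\mu$ to be bounded on $[1/2,1)$ does legitimately rescue the regime $t\le 1/2$ where your doubling step $\mu\bigl(2(1-t)\bigr)\le 2\mu(1-t)$ is unavailable. However, the paper's proof shows the case split is unnecessary, and the difference is instructive. You apply the hypothesis ``upward,'' at the point $2(1-t)$, which can escape the domain $(0,1)$; the paper instead applies it ``downward,'' at the point $1-t^2$, which always lies in the domain. Concretely, from $\tfrac{1}{2}(1-t^2)\le 1-t\le 1-t^2$ (the left inequality being your estimate $1-t^2\le 2(1-t)$ in disguise), the hypothesis in its given halving form plus monotonicity yields in one chain
\begin{equation*}
\frac{1}{2}\,\mu\left(1-t^2\right)\;\le\;\mu\left(\tfrac{1}{2}\left(1-t^2\right)\right)\;\le\;\mu(1-t)\;\le\;\mu\left(1-t^2\right),
\end{equation*}
which is the whole lemma with the same constant $K=2$ you obtain. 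So the two arguments use identical ingredients (the factorization $1-t^2=(1-t)(1+t)$, monotonicity, and the doubling/halving condition), but choosing the point of application so as to halve rather than double eliminates both the case analysis and your auxiliary boundedness observation; the latter remains a pleasant side remark about such weights, but it is not needed.
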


\medskip

{\it Proof:}  By the following inequality for all $t\in[0,1]$,
\[
\frac{1}{2}(1-t^2)\leq 1-t\leq 1-t^2,
\]
Inequality (\ref{halve}), and the assumption that $\mu$ is
increasing, we have that for these $t$'s,
\[
\frac{1}{2}\mu\left(1-t^2\right)\leq
\mu\left(\frac{1}{2}(1-t^2)\right)\leq\mu(1-t)\leq\mu(1-t^2).
\]
\hfill$\blacksquare$

\medskip

We are now prepared to prove the following norm equivalence result,
which shows that our main results for composition operators on the
log Bloch space extend to the $(k,\theta)$-log Bloch spaces defined
earlier in this section, for suitable $k$'s and $\theta$'s.  We know
of no reference containing the proof, which may be known to some
readers, so we provide a sketch of the details for the convenience
of other readers.  For $\theta>1$, we will make use of
$\mu_\theta:(0,1]\rightarrow[0,\infty)$ defined by $\mu_\theta(t)=t
\log(\theta/t).$
\begin{theorem}\label{kthetaeq}  Let $\alpha,\beta\geq e$.  Then we
have that
\[
\begin{array}{ccccc}
v_\alpha^{(2)}(z)& \sim & v_\beta^{(2)}(z)& \sim & v_\beta^{(1)}(z),\\
 {\mathcal B}^{\log}_{2,\alpha}&=&{\mathcal B}^{\log}_{2,\beta}&=&{\mathcal B}^{\log}_{1,\beta},\\
||f||_{2,\alpha}&\sim &||f||_{2,\beta}&\sim & ||f||_{1,\beta},
\end{array}
\]
as $f$ varies through these coinciding spaces.
\end{theorem}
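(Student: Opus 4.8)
The plan is to reduce all three rows of the display to properties of the single real-variable function $\mu_\theta(t)=t\log(\theta/t)$, using the identity $v_\theta^{(k)}(z)=\mu_\theta(1-|z|^k)$ that is immediate from the definitions. Once the weight equivalences in the first row are established pointwise on ${\Bbb B}_n$, the equalities of spaces (second row) and equivalences of norms (third row) follow mechanically: if $\frac{1}{K}\mu\leq\nu\leq K\mu$ on ${\Bbb B}_n$ for some $K\geq 1$, then the seminorms obey $\frac{1}{K}b_\mu^f\leq b_\nu^f\leq Kb_\mu^f$, so $b_\mu^f<\infty$ exactly when $b_\nu^f<\infty$, giving equality of the spaces as sets; and since the $|f(0)|$ summand is common to both full norms, one gets $\|f\|_{{\mathcal B}^\mu}\sim\|f\|_{{\mathcal B}^\nu}$.

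For the first equivalence $v_\alpha^{(2)}\sim v_\beta^{(2)}$, I would show directly that $\mu_\alpha\sim\mu_\beta$ on $(0,1]$. Writing the quotient as $\mu_\alpha(t)/\mu_\beta(t)=(\log\alpha-\log t)/(\log\beta-\log t)$, I observe that it is continuous and strictly positive on $(0,1]$, equals $\log\alpha/\log\beta$ at $t=1$, and tends to $1$ as $t\to 0^+$; hence it is bounded above and below by positive constants there. Evaluating at $t=1-|z|^2\in(0,1]$ then gives $v_\alpha^{(2)}\sim v_\beta^{(2)}$ on ${\Bbb B}_n$. (This step needs only $\alpha,\beta>1$.)

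For the second equivalence $v_\beta^{(2)}\sim v_\beta^{(1)}$, the key realization is that, with $t=|z|$, this is precisely $\mu_\beta(1-t^2)\sim\mu_\beta(1-t)$, which is exactly the conclusion of Lemma \ref{unsquare} applied to $\mu=\mu_\beta$. So I would verify its two hypotheses. The halving condition is automatic for every $\theta>0$, since $\mu_\theta(\frac{1}{2}t)-\frac{1}{2}\mu_\theta(t)=\frac{t}{2}\log 2\geq 0$. Monotonicity is the crucial point and is where the hypothesis $\theta\geq e$ enters: since $\mu_\theta'(t)=\log(\theta/t)-1$, one has $\mu_\theta'(t)\geq 0$ iff $t\leq\theta/e$, and $\theta\geq e$ guarantees $\theta/e\geq 1>t$ for every $t\in(0,1)$, so $\mu_\beta$ is increasing on $(0,1)$. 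Lemma \ref{unsquare} then yields $v_\beta^{(2)}\sim v_\beta^{(1)}$ after substituting $t=|z|$.

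Chaining $v_\alpha^{(2)}\sim v_\beta^{(2)}\sim v_\beta^{(1)}$ completes the first row, and the bookkeeping in the opening paragraph converts these into the second and third rows. I expect the only genuine obstacle to be the monotonicity check in the third step --- in particular, pinning down that the threshold $\theta=e$ is exactly what forces $\mu_\theta$ to increase across all of $(0,1)$, which is why the theorem requires $\alpha,\beta\geq e$ rather than merely $\alpha,\beta>1$. Everything else is routine single-variable analysis together with the observation that equivalent weights determine the same $\mu$-Bloch space with comparable norms.
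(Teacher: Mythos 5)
Your proof is correct, and its overall skeleton matches the paper's: chain the two weight equivalences, invoke Lemma \ref{unsquare} for the $k=2$ versus $k=1$ comparison, and pass mechanically from equivalent weights to equal spaces with comparable norms. The genuine difference is in the first equivalence $v_\alpha^{(2)}\sim v_\beta^{(2)}$. The paper proves it by a two-region argument: the pointwise chain $v_\alpha^{(2)}(z)\leq v_\beta^{(2)}(z)\leq\log\beta$ (which itself uses monotonicity of $\mu_\theta$, hence $\theta\geq e$), a L'Hopital computation showing $v_\beta^{(2)}/v_\alpha^{(2)}\to 1$ as $|z|\to 1^-$ to control an annulus near the boundary, and a compactness/positive-lower-bound argument on the inner ball $|z|\leq\delta$. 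You instead write the ratio explicitly as $\mu_\alpha(t)/\mu_\beta(t)=(\log\alpha-\log t)/(\log\beta-\log t)$ and observe it extends to a continuous, strictly positive function on $[0,1]$, hence is bounded above and below; this is shorter, avoids the case split and the L'Hopital step, and shows that this half of the theorem needs only $\alpha,\beta>1$, cleanly isolating the role of the hypothesis $\theta\geq e$ in the monotonicity needed for Lemma \ref{unsquare} (as you note, $\mu_\theta'(t)=\log(\theta/t)-1\geq 0$ on all of $(0,1)$ exactly when $\theta\geq e$). Also, where the paper verifies the hypotheses of Lemma \ref{unsquare} by asserting that $\mu_\beta$ is increasing and concave, you check the halving condition by the direct identity $\mu_\theta(t/2)-\tfrac{1}{2}\mu_\theta(t)=\tfrac{t}{2}\log 2\geq 0$, valid for every $\theta>0$, which is a more transparent justification than appealing to concavity.
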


\medskip

{\it Proof:}  Let $\alpha,\beta\geq e$.  The second and third set of
equivalences above immediately follow from the top set of
equivalences, which we now prove. We can assume with no loss of
generality, that $\alpha\leq\beta$. We prove the leftmost of these
two equivalences first.  The following two inequalities respectively
follow from the facts that (i) $\log$ is increasing and (ii) for all
$\theta\geq e$, $\mu_\theta$ is increasing:
\begin{equation}\label{chain}
v_\alpha^{(2)}(z)\leq v_\beta^{(2)}(z)\leq \log \beta\,\,\,{\text{ for all }}z\in{\Bbb B}_n.
\end{equation}
Since we have by L'Hopital's Rule that $v_\beta^{(2)}(z)/v_\alpha^{(2)}(z)\rightarrow 1$ as $|z|\rightarrow 1^-$, one checks that there is a $\delta>0$ such that for all $z\in{\Bbb
B}_n$ with $\delta<|z|<1$,
\begin{equation}\label{3over2}
v_\beta^{(2)}(z)\leq \frac{3}{2} v_\alpha^{(2)}(z).
\end{equation}
One verifies that for $\theta>1$, $\mu_\theta$ is bounded with a
removable discontinuity on its graph, at the origin, and the
restriction of $\mu_\theta$ to $(1-\delta^2,1)$ for any
$\delta\in(0,1)$, is therefore bounded away from $0$; in particular,
there is an $m_{\alpha,\delta}>0$, depending only on $\delta$ and
$\alpha$, such that $v_\alpha^{(2)}(z)\geq m_{\alpha,\delta}$ for
all $z\in {\overline{\delta{\Bbb B}_n}}$. By the rightmost
inequality in Relation (\ref{chain}), it follows that for these
$z$'s ,
\[
v_\beta^{(2)}(z)\leq \log \beta=\log
\beta\frac{m_{\alpha,\delta}}{m_{\alpha,\delta}}\leq \log
\beta\frac{v_\alpha^{(2)}(z)}{m_{\alpha,\delta}}=\frac{\log
\beta}{m_{\alpha,\delta}}v_\alpha^{(2)}(z).
\]
We then obtain the leftmost equivalence in the top collection of
relations in the conclusion of the theorem from this fact, the
leftmost inequality in Relation (\ref{chain}), and the fact that
Inequality (\ref{3over2}) holds for all $z\in {\Bbb B}_n$ such that
$\delta<|z|<1$.

We now prove the second equivalence in the top relation appearing in
the conclusion of the theorem. One first checks that since
$\beta\geq e$, $\mu_\beta$ is increasing and concave. In particular,
$\mu_\beta$ satisfies the hypotheses of Lemma \ref{unsquare}, which
allows us to deduce that $\mu_\beta(1-t)\sim \mu_\beta(1-t^2)$.  It
follows that the second equivalence in the top relation in the
conclusion of the theorem holds. \hfill${\blacksquare}$

\medskip

\section{Boundedness, Compactness, and Essential Norms of Composition operators from ${\mathcal B}^{\log}_{k,\theta}({\Bbb
D})$ to ${\mathcal B}^{\mu}({\Bbb D})$}

Theorem \ref{kthetaeq} and the theorems presented in Section 3 and 5
together immediately imply the following more general results on
composition operators from log-Bloch type spaces to weighted Bloch
spaces of ${\Bbb D}$.  To date, various members of this family have
been ambiguously given the same name, ``logarithmic Bloch space". We
do not claim to have found the entire collection of $(k,\theta)\in
{\Bbb R}^2$ to which the results of this paper extend.

\begin{theorem}\label{main}
Suppose that $\theta\geq e$, and assume that $k=1$ or $2$.  Suppose
that $\phi:{\Bbb D}\rightarrow{\Bbb D}$ is holomorphic, and assume
that $\mu$ is a weight on ${\Bbb D}$. Then the following statements
hold:

(A) $C_\phi:\mathcal{B}^{\log}_{k,\theta}\to \mathcal{B}^{\mu}$ is
bounded if and only if
    $$
     \sup_{j\in\Bbb N} \frac{\left\|\phi^j\right\|_{{\mu}}}{\left\|F_j\right\|_{\log}}<\infty.
    $$
    if and only if
$$
     \sup_{j\in\Bbb N} \frac{\left\|\phi^j\right\|_{{\mu}}}{\log(j+1)}<\infty.
    $$

(B) $C_\phi:\mathcal{B}^{\log}_{k,\theta}\to \mathcal{B}^{\mu}$ is
compact if and only if
    $$
      \lim_{j\to \infty}\frac{\left\|\phi^j\right\|_\mu}{\left\|F_j\right\|_{\log}}=0.
    $$
    if and only if
    $$
      \lim_{j\to \infty}\frac{\left\|\phi^j\right\|_{\mu}}{\log(j+1)}=0.
    $$

(C) If $C_\phi:\mathcal{B}^{\log}_{k,\theta}\to \mathcal{B}^{\mu}$,
then
  $$
   \left\|C_\phi\right\|_e^{\mathcal{B}^{\log}_{k,\theta}\to \mathcal{B}^{\mu}}\sim
   \limsup_{j\to\infty}\frac{\left\|\phi^j\right\|_{{\mu}}}{\left\|F_j\right\|_{\log}}\sim
   \limsup_{j\to\infty}\frac{\left\|\phi^j\right\|_{{\mu}}}{\log(j+1)}.
  $$
\end{theorem}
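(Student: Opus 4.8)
The plan is to deduce Theorem \ref{main} entirely from the one-variable results of Sections 3 and 5 together with the norm-equivalence established in Theorem \ref{kthetaeq}, reproving no estimate. The key observation is that the classical log-Bloch space is the member $\mathcal{B}^{\log}=\mathcal{B}^{\log}_{1,3}$ of the family, and since $3\geq e$, Theorem \ref{kthetaeq} (specialized to $n=1$) applies to it. Applying that theorem with $\beta=3$ (and an auxiliary value such as $\alpha=e$) and using transitivity of $\sim$, for every $k\in\{1,2\}$ and $\theta\geq e$ the spaces $\mathcal{B}^{\log}_{k,\theta}$ and $\mathcal{B}^{\log}$ coincide as sets of holomorphic functions, and there is a constant $K\geq 1$, depending only on $k$ and $\theta$, with
\[
\frac{1}{K}\,\|f\|_{\mathcal{B}^{\log}}\leq \|f\|_{k,\theta}\leq K\,\|f\|_{\mathcal{B}^{\log}}
\]
for all $f$ in this common space. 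Thus $\mathcal{B}^{\log}_{k,\theta}$ and $\mathcal{B}^{\log}$ are the same topological vector space equipped with equivalent norms, and the identity map between them is an isomorphism.

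First I would settle (A) and (B). Writing $C_\phi:\mathcal{B}^{\log}_{k,\theta}\to\mathcal{B}^\mu$ as the composite of the identity isomorphism $\mathcal{B}^{\log}_{k,\theta}\to\mathcal{B}^{\log}$ with $C_\phi:\mathcal{B}^{\log}\to\mathcal{B}^\mu$, boundedness of one map is equivalent to boundedness of the other, and likewise for compactness, since compactness of a linear map is a topological property unaffected by passage to an equivalent norm on the domain. Hence Theorem \ref{th-acotado} yields the first criterion in (A) and Theorem \ref{th-gen-MM} yields the first criterion in (B). To obtain the equivalent ``$\log(j+1)$'' criteria, I would invoke Corollary \ref{cor-norma-zn}, which gives $\|F_j\|_{\log}\sim \log(j+1)/e$ and hence
\[
\frac{\|\phi^j\|_\mu}{\|F_j\|_{\log}}\sim \frac{e\,\|\phi^j\|_\mu}{\log(j+1)};
\]
a fixed positive multiplicative constant alters neither the finiteness of a supremum over $j$ nor the vanishing of a limit as $j\to\infty$, so the two forms of each condition are interchangeable.

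For (C) I would argue that the two essential norms are equivalent. Because $\mathcal{B}^{\log}_{k,\theta}$ and $\mathcal{B}^{\log}$ share the same underlying space with equivalent norms, an operator $S:\mathcal{B}^{\log}_{k,\theta}\to\mathcal{B}^\mu$ is compact if and only if it is compact as an operator $\mathcal{B}^{\log}\to\mathcal{B}^\mu$, so the infima defining the two essential norms run over the same family of compact operators; moreover each relevant operator norm changes by at most the factor $K$ when the domain norm is replaced by the equivalent one. Consequently $\|C_\phi\|_e^{\mathcal{B}^{\log}_{k,\theta}\to\mathcal{B}^\mu}\sim \|C_\phi\|_e^{\mathcal{B}^{\log}\to\mathcal{B}^\mu}$, and Theorem \ref{th-norm-esencial} then delivers the first equivalence of (C); the second equivalence follows from the asymptotic relation for $\|F_j\|_{\log}$ above, exactly as in the preceding paragraph. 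The one point demanding genuine care—and the step I expect to be the main obstacle—is precisely this transfer of essential norms: one must verify that replacing the domain norm by an equivalent one distorts the distance to the compact operators by no more than a bounded factor, and that the class of compact operators is genuinely the same for both norms, rather than taking either claim for granted. Everything else reduces to a direct citation of the earlier theorems.
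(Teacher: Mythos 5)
Your proposal is correct and takes essentially the same route as the paper, which likewise obtains Theorem \ref{main} by combining the norm equivalence of Theorem \ref{kthetaeq} (applied with $\beta=3\geq e$ and transitivity of $\sim$) with Theorems \ref{th-acotado}, \ref{th-gen-MM}, \ref{th-norm-esencial} and Corollary \ref{cor-norma-zn}. The paper states this deduction as immediate; you simply supply the details it leaves implicit, including the (correct) verification that equivalent domain norms distort operator norms, compactness, and hence the essential norm by at most a bounded factor.
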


\section{Acknowledgments}

The authors are grateful to Stevo Stevi\'{c} for kindly emailing the
papers \cite{Listevic}, \cite{Stevic}, \cite{Stevic2}, \cite{Stevic3},
 and \cite{StevicAgarwal} and also for kindly making
many very helpful comments on the manuscript.








\end{document}